\numberwithin{equation}{section}
\newtheorem{theorem}{Theorem}[section]
\newtheorem*{theorem*}{Theorem}
\newtheorem{lemma}[theorem]{Lemma}
\newtheorem{definition}[theorem]{Definition}
\theoremstyle{remark}
\newtheorem{remark}[theorem]{Remark}
\newtheorem*{remark*}{Remark}
\newcommand{\bke}[1]{\left ( #1 \right )}
\newcommand{\bkt}[1]{\left [ #1 \right ]}
\newcommand{\bket}[1]{\left \{ #1 \right \}}
\newcommand{\norm}[1]{ \| #1  \|}
\newcommand{\abs}[1]{\left | #1 \right |}
\newcommand\al{\alpha}
\newcommand\de{\delta}
\newcommand\ep{\epsilon}
\newcommand\e {\epsilon}
\newcommand\la{\lambda}
\newcommand{\R}{\mathbb{R}}
\newcommand{\RR}{\mathbb{R}}
\renewcommand{\div}{\mathop{\rm div}}
\newcommand{\esssup} {\mathop{\rm ess\,sup}}
\newcommand{\CKN}{\text{\tiny CKN}}
\newcommand{\EQ}[1]{\begin{equation} #1 \end{equation}}
\newcommand{\EQS}[1]{\begin{equation}\begin{split} #1 \end{split}\end{equation}}
\newcommand{\EQN}[1]{\begin{equation*}\begin{split} #1 \end{split}\end{equation*}}
\newcommand{\loc}{\mathrm{loc}}
\newcommand{\uloc}{\mathrm{uloc}}
\newcommand*{\dt}[1] {\accentset{\mbox{\large\bfseries .}}{#1}}
\renewcommand{\dot}{\dt}
\begin{document}

\title[Local regularity conditions for Navier-Stokes equations]
{Local regularity conditions on initial data for local energy solutions of the Navier-Stokes equations}
\author{Kyungkeun Kang}
\address[K. Kang]{Department of Mathematics, Yonsei University, Seoul 120-749, South Korea} 
\email{kkang@yonsei.ac.kr}

\author{Hideyuki Miura}
\address[H. Miura]{Department of Mathematical and Computing Science, Tokyo Institute of Technology, Tokyo 152-8551, Japan}
\email{miura@is.titech.ac.jp}

\author{Tai-Peng Tsai}
\address[T.-P. Tsai]{Department of Mathematics, University of British Columbia,
Vancouver, BC V6T 1Z2, Canada}
\email{ttsai@math.ubc.ca}

\thanks{
We thank Professor Reinhard Farwig for valuable suggestions 
on Theorem \ref{thm:0430}.
The research of Kang was partially supported by NRF-2019R1A2C1084685 and NRF-2015R1A5A1009350. The research of Miura was partially supported
by JSPS grant 17K05312. The research of Tsai was partially supported
by NSERC grant RGPIN-2018-04137.}

\keywords{Navier-Stokes equations, regular sets, local energy solutions}
\subjclass[2010]{35Q30, 76D05, 76D03}
\maketitle

\begin{center}
Dedicated to Hideo Kozono on the occasion of his 60th birthday
\end{center}

\begin{abstract}
We study the regular sets of local energy solutions
to the Navier-Stokes equations in terms of conditions on the initial data.
It is shown that if a weighted $L^2$ norm of the initial data is finite, then all local energy solutions are regular in a region
confined by space-time hypersurfaces determined by the weight.
This result refines and generalizes Theorems C and D of Caffarelli, Kohn and Nirenberg (Comm. Pure Appl. Math. 35; 1982) and  our recent paper \cite{KMT21} as well.

\end{abstract}

\section{Introduction}
\subsection{Regular sets for local energy  solutions}

We are concerned with the regularity of weak solutions of the incompressible Navier-Stokes equations
\begin{align}\label{NS}\tag{\textsc{ns}}
\partial_tv-\Delta v +v\cdot\nabla v+\nabla p = 0,
\quad
\div v =0
\end{align}
in $\R^3$,
associated with the initial value $v|_{t=0}=v_0$ with $\div v_0=0$.
 The global existence of weak solutions for finite energy
initial data was established by Leray \cite{Leray},  and extended to domains by Hopf \cite{Hopf}.
Despite a lot of efforts since their foundational work, the global regularity of the weak solutions remains a longstanding open problem.
It is known from works \cite{FJR,  Kato84, GM} that for $v_0 \in L^q(\R^3)$ with $q \ge 3$, \eqref{NS} has a regular solution defined on some short time interval where it satisfies the following estimate:
\begin{equation}
\label{infty}
\|v(t)\|_{L^\infty} \le C(v_0)t^{-\frac3{2q}}
\end{equation}
with the constant $C(v_0)=C_q\|v_0\|_{L^q}$.
%This can be regarded as a regularizing rate estimate 
%of the solution for initial data in $L^q(\R^3)$.
 Motivated by the problem for constructing large forward self-similar solutions to \eqref{NS},
Jia and \v Sver\'ak \cite{JS} asked under which condition this result can be localized in space. 
For $B_r(x)=\{y \in \R^3:\, |x-y|<r\}$ and $B_r=B_r(0)$, 
 their question can be stated as follows:
\begin{enumerate}
\item[(R)]
If $v_0$ is a general initial data for
which a suitable weak solution $v$ is defined and $v_0|_{B_2} \in L^q(B_2)$,
can we conclude that $v$ is regular in $B_1 \times (0, t_1)$ for some time $t_1 > 0$?
\end{enumerate}
This question is settled affirmatively for the scale subcritical case $q>3$ in \cite{JS} and for the critical case $q=3$ 
in \cite{BP,KMT20}; see also \cite{Tao} for a condition on the initial enstrophy and
\cite{BP} for further extension to the $L^{3,\infty}$ space and the critical Besov spaces.
In our previous work \cite{KMT21}, we revisited this problem from a slightly different view point.
Let us define the scaled local energy of the initial data
by
\begin{equation}
\label{N}
 N_{(\alpha)}(v_0):= \sup_{r\in (0,1]} \frac{1}{r^{\alpha}}\int_{B_r} \abs{v_0(x)}^2dx
\end{equation}
for $\alpha >0$.
We showed that if $N_{(1)}$ (with $\al=1$)
is sufficiently small, any local energy solution is regular and
satisfies the estimate \eqref{infty} in the region
\[
\bket{(x,t) \in B_1\times (0,\infty)\,: cN_{(1)}^2\abs{x}^2 \le t < t_1}
\]
for some $t_1>0$;\,see Section 2 for the definition of the local energy solutions. 
Concerning the question (R),
the above result shows that if the local Morrey norm defined by 
$$
\|f\|_{m^{2,1}(B_2)} :=\sup_{x_0 \in B_2, \,r\in (0,1]}
\left( \frac{1}{r}\int_{B_r(x_0) \cap B_2} \abs{f(x)}^2dx \right)^{\frac12}
$$
is sufficiently small, the local energy solution is regular in $B_1 \times (0,t_1)$
for some $t_1>0$ and satisfies the $L^\infty$ 
bound \eqref{infty}.
By the well-known relation $m^{2,1} \hookleftarrow L^3$, the latter recovers the results
on the local regularity results \cite{BP, KMT20} for  $L^3$ data. 
The goal of the present work is to study the local in space regularity 
of the solutions with data satisfying the condition 
$N_{(\alpha)} <\infty$ for general $\alpha >1$.
Intuitively, the bigger $\alpha$ is, the more regular the data is, and so 
is the solution at least locally. We justify it in the following sense:
The solution is regular in a larger region and it also satisfies 
an improved $L^\infty$ estimate in a slightly different region.
More precisely, it can be stated as follows.

\begin{theorem}
\label{thm:main}
Assume that a local energy solution $v$ in $\R^3 \times (0, T)$
with the initial data $v_0 \in L^2_{\uloc}$ satisfies 
$N_{(\alpha)} <\infty$ for some $\alpha >1$.
Then there exists $T_1=T_1(\|v_0\|_{L^2_{\uloc}}, N_{(\alpha)}, \al, T)
>0$ 
such that $v$ is regular in the region
\[
\Pi_1=\bket{(x,t) \in B_{\frac12} \times (0,T)\,:
 c_1N_{(\alpha)}^2\abs{x}^{2\alpha}\le t<T_1}
\]
and satisfies
\begin{equation}
\label{0204}
\abs{v(x,t)}\le Ct^{-\frac{1}{2}} \qquad  \mbox{for} \ (x,t) \in 
\Pi_1.
\end{equation} 
Moreover, if $\alpha \in (1,4)$, 
there exist $T_2=T_2(\|v_0\|_{L^2_{uloc}}, N_{(\al)}, \al, T)>0$
and $M=M(\|v_0\|_{L^2_{uloc}}, N_{(\al)}, \al)$$>0$ such that 
\begin{equation}
\label{0205}
|v(x,t)| \le Mt^{\frac{\alpha-3}{4}} \qquad \textit{for} \ (x,t) \in \Pi_2
\end{equation}
holds with
\[
\Pi_2=\bket{(x,t) \in B_{\frac12} \times (0,T)\,:
 \frac{c_2N_{(\al)}^{\frac{2}{\alpha}}}{(1+N_{(\al)})^{2+\frac{2}{\al}}}\abs{x}^{2}\le t<T_2}.
\]
 Here  $c_1$, $c_2$, and $C$ 
are positive constants depending only on $\al$.
\end{theorem}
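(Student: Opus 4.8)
The plan is to establish the $L^\infty$ bounds by a rescaling argument that reduces the local-in-space regularity problem to the known short-time regularity theory for $L^q$ data, quantified by the estimate \eqref{infty}. The key observation is that the hypothesis $N_{(\alpha)}<\infty$ is scale-sensitive: if I zoom into a ball $B_\rho$ centered at the origin and rescale by the natural parabolic scaling $v^\rho(y,s)=\rho\, v(\rho y,\rho^2 s)$, then the rescaled data inherits a quantitative smallness from the weight $r^{-\alpha}$, and the exponent $\alpha>1$ is precisely what makes the rescaled local energy controllable. So the strategy is to split $B_{1/2}$ into dyadic annuli $A_k=\{2^{-k-1}\le |x|<2^{-k}\}$, rescale on each annulus to unit scale, verify that the rescaled initial data satisfies a smallness condition in an appropriate Morrey-type norm, and then invoke the local regularity result of our previous work \cite{KMT21} (the $\alpha=1$, small-$N_{(1)}$ case) on each piece.

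**First I would** carry out the rescaling bookkeeping carefully. On the annulus at scale $\rho\sim 2^{-k}$, the rescaled data $v_0^\rho$ has local energy $\rho^{-1}\int_{B_1}|v_0^\rho|^2\,dy \sim \rho^{-1}\cdot\rho^{-1}\int_{B_\rho}|v_0|^2\,dx \le \rho^{\alpha-2}N_{(\alpha)}$, using the definition \eqref{N}. Since $\alpha>1$, this is not automatically small at unit scale, so the correct move is to match scales so that the rescaled $m^{2,1}$-type norm is forced small; the weight threshold $t\ge c_1 N_{(\alpha)}^2|x|^{2\alpha}$ in the definition of $\Pi_1$ is exactly the curve along which the rescaled time variable $s=\rho^{-2}t$ stays in the regularity window guaranteed by \cite{KMT21}. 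Unwinding the scaling in \eqref{infty}, a bound $|v^\rho|\lesssim s^{-1/2}$ at the rescaled level gives $|v(x,t)|\lesssim \rho^{-1}(\rho^{-2}t)^{-1/2}=t^{-1/2}$, which is precisely \eqref{0204} with the $\rho$-dependence canceling — this cancellation is the reason the exponent in \eqref{0204} is $-1/2$ independent of $\alpha$.

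**For the improved estimate** \eqref{0205} when $\alpha\in(1,4)$, I would run a sharper version of the same scaling but now track the $L^q$-smallness more precisely. The point is that $N_{(\alpha)}<\infty$ with $\alpha>1$ controls a subcritical Morrey norm, so the rescaled data lies in a space slightly better than $L^3$, giving, via \eqref{infty} with an effective exponent $q=q(\alpha)$, a decay rate better than $t^{-1/2}$ near the spatial origin. Matching the scaling factors and choosing $\rho$ optimally as a function of $t$ (rather than summing over fixed dyadic scales) should produce the exponent $\frac{\alpha-3}{4}$ and the new threshold curve $t\ge c_2 N_{(\alpha)}^{2/\alpha}(1+N_{(\alpha)})^{-2-2/\alpha}|x|^2$. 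The restriction $\alpha<4$ enters because for $\alpha\ge 4$ the effective integrability exponent $q$ exceeds the range where the elementary scaling of \eqref{infty} remains the binding constraint; heuristically, the solution becomes so regular that the decay saturates.

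**I expect the main obstacle** to be the passage from the pointwise-in-scale regularity estimates to a genuinely uniform statement on the full region $\Pi_1$, because the local energy solution is only controlled a priori through the $L^2_{\uloc}$ norm and the local energy inequality, not through a clean subcritical bound. The delicate part is ensuring that the perturbation/pressure interaction between neighboring dyadic annuli does not destroy the smallness on each piece: the pressure is nonlocal, so the rescaled equation on $B_\rho$ carries a nonlocal forcing from the far field that must be absorbed into the global $L^2_{\uloc}$ bound and shown to be negligible against the gained smallness. I anticipate that controlling this far-field pressure contribution — and thereby fixing the dependence of $T_1,T_2$ on $\|v_0\|_{L^2_{\uloc}}$ and $T$ — will require the most careful estimation, and it is where the local energy structure (as developed in Section 2) does the essential work.
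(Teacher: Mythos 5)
There is a genuine gap, and it is exactly at the point you flag as ``the main obstacle'' but do not resolve: the per-annulus zoom-in is incompatible with the uniform time $T_1$ in $\Pi_1$. If you rescale on the annulus $|x|\sim\rho=2^{-k}$ by $v^\rho(y,s)=\rho v(\rho y,\rho^2 s)$, the rescaled initial data indeed has small critical norm ($N_{(1)}(v_0^\rho)\le \rho^{\alpha-1}N_{(\alpha)}$), but its uniformly local norm blows up: $\|v_0^\rho\|_{L^2_{\uloc}}\sim \rho^{-1/2}\|v_0\|_{L^2_{\uloc}}$. Since the time of validity of the result you want to invoke (Theorem 3.4 of \cite{KMT21}, or any variant of \eqref{infty} for local energy solutions) depends inversely on a power of this norm --- in this paper the dependence is $t_1\sim c/(1+\|v_0\|_{L^2_{\uloc}}^{12})$, forced by the control of the nonlocal pressure --- the guaranteed regularity window, after scaling back, is only $c N_{(\alpha)}^2|x|^{2\alpha}\le t\lesssim \rho^{8}\|v_0\|_{L^2_{\uloc}}^{-12}$ on that annulus. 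The union over dyadic scales is therefore a cusp of the form $\bket{cN_{(\alpha)}^2|x|^{2\alpha}\le t\lesssim |x|^{8}}$, which collapses as $x\to 0$ and cannot cover $\Pi_1$, where regularity is required for all $t$ up to a \emph{fixed} $T_1$ at every $x$, however small. No choice of ``$\rho$ optimally as a function of $t$'' repairs this within your scheme, because any zoom-in factor small enough to create smallness of the critical norm simultaneously destroys the $L^2_{\uloc}$ control that bounds the far-field pressure and sets the time horizon.

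The paper's proof is built precisely to dodge this. Instead of scaling down to each annulus, it \emph{translates}: for $x_0\in B_{1/2}$ it considers $v_{x_0}(x,t)=\rho v(x_0+\rho x,\rho^2 t)$ with $\rho=1-|x_0|\in[\tfrac12,1]$, so $\|v_{x_0}(t=0)\|_{L^2_{\uloc}}\le C\|v_0\|_{L^2_{\uloc}}$ is \emph{not} degraded. The elementary Lemma \ref{lemmaA} converts the weighted hypothesis at the origin into smallness of the critical scaled energy on balls $B_r(x_0)$ for all radii $r$ down to $R'(x_0)\sim N_{(\alpha)}|x_0|^{\alpha}$; then the key a priori estimate (Lemma \ref{lem:main}), which controls the nonlocal pressure by the unchanged $L^2_{\uloc}$ norm, propagates this smallness in time, and the CKN $\epsilon$-regularity lemma yields $|v(x_0,t)|\le Ct^{-1/2}$ for $t\sim r^2\gtrsim |x_0|^{2\alpha}$ up to a time uniform in $x_0$. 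A genuine rescaling is used only \emph{once}, globally, with a fixed factor $\rho=(\epsilon_*/N_{(\alpha)})^{1/(\alpha-1)}$ to reduce large $N_{(\alpha)}$ to small; this degrades $T_1$ by a controlled amount but uniformly over $\Pi_1$. Two smaller points: your scaling bookkeeping has an extra factor ($\int_{B_1}|v_0^\rho|^2\,dy \le \rho^{\alpha-1}N_{(\alpha)}$, not $\rho^{\alpha-2}N_{(\alpha)}$, which actually helps you), and the restriction $\alpha<4$ in the paper comes not from a saturation of the $L^q$ theory but from convergence of the dyadic sum $\sum_k 2^{-(4-\alpha)k}$ in the nonlocal pressure estimate \eqref{pnonloc}.
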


\begin{figure}[h!]
\label{fig1.1}
\setlength{\unitlength}{2pt}
\begin{picture}(160,60)
\thicklines
\put(80,0){\vector(0,1){58}}
\put(80,0){\vector(1,0){90}}
\put(80,0){\vector(-1,0){90}}
\qbezier[2000](30,50)(80,-50)(130,50)
\qbezier[2000](15,25)(80,-25)(145,25)
\put(0,50){\line(1,0){160}}
\put(0,25){\line(1,0){160}}
\put(0,-3){\line(0,1){6}}
\put(160,-3){\line(0,1){6}}
\put(158,4){{\tiny $1/2$}}
\put(172,0){$x$}
\put(79,60){$t$}
\put(73,27){$T_1$}
\put(73,52){$T_2$}
\put(55,35){$\Pi_2$}
\put(41,12){$\Pi_1$}
\put(125,35){$t=c|x|^2$}
\put(134,12){$t=c|x|^{2\al}$}
\end{picture}
\setlength{\unitlength}{1pt}
\caption{Regular regions for Theorem \ref{thm:main} for the 
case $T_2 > T_1$. }
\end{figure}
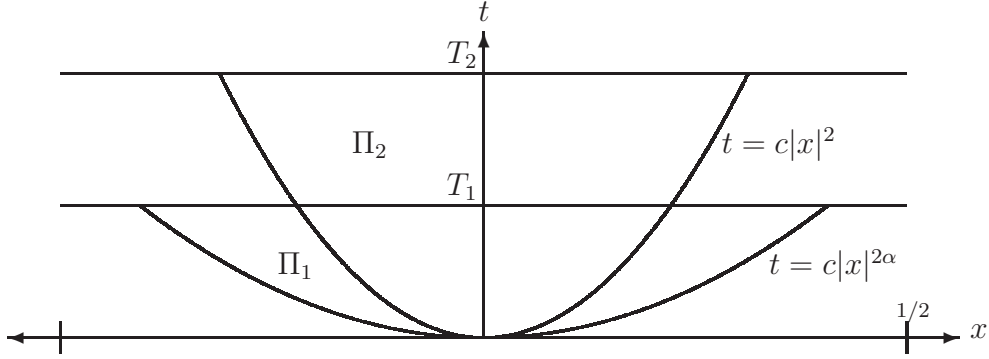

\emph{Comments for Theorem \ref{thm:main}:}

\begin{enumerate}
\item[a)] Theorem \ref{thm:main} is also valid for $\al=1$ if we further assume smallness of $N_{(1)}$. In that case we recover Theorem 3.4 of \cite{KMT21}.
\item[b)] In Theorem \ref{large data} we will give a more general statement for $\al \in (1,4)$ in terms of 
\[
N_{(\alpha),R}:= 
\sup_{r \in (R, 1]} \frac{1}{r^\alpha}\int_{B_r(0)}|v_0|^2
\qquad
\Big(0 \le R\le \frac12 \Big).
\] 

\item[c)] $T_1$ will be given by \eqref{T1.def}. $T_2$ will be given by \eqref{T2.def} as $S_2$ with $N_R = N_{(\al),0}$. 
None is necessarily larger than the other.
\end{enumerate}

As one of applications of our result to some specific cases, 
we show a regularizing estimate of the form \eqref{infty}
 for locally $L^q$ data, $3<q\le \infty$. 

\begin{theorem}
\label{thm:0209}
Let $(v,p)$ be a local energy solution in $\R^3 \times (0, T)$
 with the initial data $v_0 \in L^2_{\uloc}$. 
If $v_0 \in L^q(B_2)$ for some $q \in (3,\infty]$, then 
there exists $T_3=T_3(q, \|v_0\|_{L^q(B_2)}, \|v_0\|_{L^2_{\uloc}})>0$ 
such that $v$ is regular in $B_{1} \times (0,T_3)$
and satisfies
\begin{equation}\label{th1.2-eq1}
\|v(t)\|_{L^\infty(B_1)} \le C
(1+\|v_0\|_{L^q(B_2)})^{3-\frac6{q}}\|v_0\|_{L^q(B_2)}t^{-\frac{3}{2q}} \qquad \textit{for} \ t \in (0,T_3)
\end{equation}
with an absolute constant $C>0$.
\end{theorem}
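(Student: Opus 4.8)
The plan is to deduce Theorem \ref{thm:0209} from Theorem \ref{thm:main}, or rather from its refinement Theorem \ref{large data}, by making the choice of exponent dictated by scaling,
\[
\alpha = 3 - \frac{6}{q}.
\]
For $q \in (3,\infty]$ this gives $\alpha \in (1,3] \subset (1,4)$, so the second part of Theorem \ref{thm:main} is available, and the arithmetic identity
\[
\frac{\alpha - 3}{4} = -\frac{3}{2q}
\]
shows that the time exponent produced by \eqref{0205} is precisely the one demanded by \eqref{th1.2-eq1}.

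First I would control $N_{(\alpha)}$ by the local $L^q$ norm. For $r \in (0,1]$ and a center $x_0 \in B_1$, Hölder's inequality gives
\[
\frac{1}{r^{\alpha}}\int_{B_r(x_0)}|v_0|^2\,dx
\le \frac{|B_r|^{1-\frac{2}{q}}}{r^{\alpha}}\,\|v_0\|_{L^q(B_r(x_0))}^2
= c\,\|v_0\|_{L^q(B_r(x_0))}^2 \le c\,\|v_0\|_{L^q(B_2)}^2,
\]
using $|B_r|^{1-\frac{2}{q}} = c\,r^{3-\frac{6}{q}} = c\,r^{\alpha}$ and $B_r(x_0)\subset B_2$; the case $q=\infty$ is identical with $\alpha=3$. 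Hence the origin-centered quantity satisfies $N_{(\alpha)} \le c\,\|v_0\|_{L^q(B_2)}^2$, and the analogous quantity centered at any $x_0 \in B_1$ obeys the same bound, uniformly in $x_0$.

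Next, to upgrade the origin-centered conclusion of Theorem \ref{thm:main} to the full ball $B_1$, I would apply the (translation invariant) theorem at every center $x_0 \in B_1$. Since the defining inequality of $\Pi_2$ collapses to $0 \le t$ at the center $x = x_0$, the estimate \eqref{0205} yields $|v(x_0,t)| \le M\,t^{-\frac{3}{2q}}$ for all $t \in (0,T_2)$, while $\Pi_1$ gives regularity at $(x_0,t)$ for $t \in (0,T_1)$. Both $M$ and the times $T_1,T_2$ depend only on $\alpha$, on the uniformly bounded $N_{(\alpha)}^{x_0} \le c\|v_0\|_{L^q(B_2)}^2$, and on the translation-invariant $\|v_0\|_{L^2_{\uloc}}$; taking $T_3$ to be the resulting uniform lower bound for $\min(T_1,T_2)$ and the supremum over $x_0 \in B_1$ then gives the $L^\infty(B_1)$ bound. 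A short covering remark, noting that the open sets $\{\,c_1 N_{(\alpha)}^2|x-x_0|^{2\alpha} < t < T_1\,\}$ fill $B_1 \times (0,T_3)$, confirms that regularity holds on the whole cylinder.

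The main obstacle is the precise prefactor $C(1+\|v_0\|_{L^q(B_2)})^{3-\frac{6}{q}}\|v_0\|_{L^q(B_2)}$, which forces me to track the explicit dependence of $M$ on $N_{(\alpha)}$ within the proof of Theorem \ref{large data} rather than invoke it as a black box. I expect $M \lesssim (1 + N_{(\alpha)}^{1/2})^{\alpha}\,N_{(\alpha)}^{1/2}$, which together with $N_{(\alpha)}^{1/2} \le c\,\|v_0\|_{L^q(B_2)}$ and $\alpha = 3 - \tfrac{6}{q}$ reproduces exactly the stated prefactor; the delicate point is that this constant must emerge independent of $\|v_0\|_{L^2_{\uloc}}$, that norm being allowed to enter only through the time $T_3$.
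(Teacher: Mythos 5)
Your overall strategy coincides with the paper's: take $\alpha = 3-\tfrac6q$, bound the scaled local energy of $v_0$ centered at each $x_0\in B_1$ by $C\|v_0\|_{L^q(B_2)}^2$ via H\"older (the paper's \eqref{0211}), run the machinery of Theorem \ref{large data} at every center (where the paraboloid constraint collapses at $x=x_0$), and take the supremum over centers. You also correctly recognize that Theorems \ref{thm:main} and \ref{large data} cannot be invoked as black boxes: case (i) of Theorem \ref{large data} fixes $\delta=70$, so its prefactor does not vanish as $\|v_0\|_{L^q}\to 0$, and in case (ii) the term $N_R^{3/2}\|v_0\|_{L^2_{\uloc}}S_2^{\frac14(4-\al)}\le CN_R^{3/2}$ gives, for $3<q<6$ and large data, the power $3$ of $\|v_0\|_{L^q}$ instead of the stated $3-\tfrac6q$.

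However, the step you label the ``main obstacle'' is exactly where the proof's one new ingredient lives, and your proposal does not supply it. Merely ``tracking the explicit dependence of $M$ on $N_{(\al)}$'' inside the proof of Theorem \ref{large data} does not yield $M\lesssim (1+N^{1/2})^{\al}N^{1/2}$: with $\delta\simeq N$, the pointwise bound \eqref{0220b} on the full time interval $(S_a,S_b)$ genuinely contains the extra term $(1+\delta)^{3/2}\|v_0\|_{L^2_{\uloc}}\,t^{\frac14(4-\al)}$, and on $(0,S_b)$ this is only controlled by $C(1+\delta)^{3/2}$, which is not dominated by $(1+\delta^{\al/2})\delta^{1/2}$ when $\al<2$ and $\delta$ is large. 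The paper's fix is not constant-tracking but a further shrinking of the time interval: it imposes the additional restriction $t\le \frac{1}{1+\delta^2}\bke{\frac{c\delta}{\|v_0\|_{L^2_{\uloc}}}}^{\frac{4}{4-\al}}$ (the second entry of $\widetilde S_b$ in the proof of Theorem \ref{thm:0209}), under which $(1+\delta)^{3/2}\|v_0\|_{L^2_{\uloc}}\,t^{\frac14(4-\al)}\le C(1+\delta^{\al/2})\delta^{1/2}$, so the $L^2_{\uloc}$-dependent term is absorbed and the prefactor becomes $C(1+\delta^{\al/2})\delta^{1/2}$ with $\delta=C\|v_0\|_{L^q(B_2)}^2$, which is exactly \eqref{th1.2-eq1}; the price, a $\|v_0\|_{L^2_{\uloc}}$-dependent upper bound on $t$, is paid by $T_3$, which is allowed to carry that dependence. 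Your write-up names this requirement (``the constant must emerge independent of $\|v_0\|_{L^2_{\uloc}}$'') but offers no mechanism to achieve it, so as written the argument does not close; once this time-restriction device is added, your proof becomes the paper's proof.
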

\noindent
Estimate \eqref{th1.2-eq1}  does not explicitly depend on $\|v_0\|_{L^2_{\uloc}}$. Its dependence on $\|v_0\|_{L^2_{\uloc}}$ is through the time upper bound $T_3$.
In \cite[Theorem 3]{BP2}, Barker and Prange obtained
an $L^\infty$ bound in some time interval for locally 
$L^6$ initial data with the aim of the behavior of the
$L^3$ norm near the singular point.

In \cite{CKN}, Caffarelli, Kohn, and Nirenberg established 
estimates of the regular sets for the suitable weak solutions for data in 
$L^2$ weighted spaces. 
Define weighted $L^2$ norm
\EQ{
\|v_0\|_{L^{2,\beta}}:=\||x|^{\frac{\beta}{2}}v_0\|_{L^2(\R^3)}
}
 for $\beta \in \R$.
It is shown in \cite{CKN} that if the finite energy data satisfies $\|v_0\|_{L^{2,1}} <\infty$,
then there exists a suitable weak solution which is regular in the set 
of points satisfying $t> \min(C,K|x|^{-2})$.
Moreover it is also shown that if the finite energy data satisfies $\|v_0\|_{L^{2,-1}} <\epsilon_0$
with some absolute small constant $\epsilon_0>0$, then
there exists a suitable weak solution which is regular in the set $t>C |x|^2$; see \cite{DL} for its refinement.
In \cite[Corollary 1.4]{KMT21}, we extended estimates of the regular sets for general data in $L^{2,\beta}$, $\beta \ge  -1$.
By using Theorem \ref{thm:main}, we are able to show similar
 results  for $\beta$ below $-1$ at least locally in time.

\begin{theorem}
\label{thm:0430}
Let $(v,p)$ be a local energy solution in $\R^3 \times (0,T)$
 for  the initial data $v_0 \in L^2_{\uloc}$.
 Assume that $v_0 \in L^{2,\beta}(\R^3)$ for some $\beta <-1$.
 Then there exist
positive constants $T_4=T_4(v_0)$ and $c(v_0)$ such that $v$ is regular in the set
$$
\bket{(x,t) \in \R^3 \times (0,T)\,:\,c(v_0)|x|^{-2\beta}\le t<T_4(v_0)}.
$$
\end{theorem}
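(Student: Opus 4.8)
The plan is to deduce Theorem~\ref{thm:0430} from Theorem~\ref{thm:main} by choosing the exponent $\alpha = -\beta$. Since $\beta < -1$ we have $\alpha > 1$, which is exactly the range in which the first assertion of Theorem~\ref{thm:main} (the region $\Pi_1$ together with the bound \eqref{0204}) applies, with no upper restriction on $\alpha$ needed. The geometry also matches: the lower boundary $t = c_1 N_{(\alpha)}^2 |x|^{2\alpha}$ of $\Pi_1$ becomes $t = c_1 N_{(\alpha)}^2 |x|^{-2\beta}$, which is precisely the surface appearing in the statement.

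The key computational step is to bound the scaled local energy $N_{(\alpha)}(v_0)$ by the weighted norm. I would argue as follows: for $x \in B_r$ with $r \le 1$, monotonicity of $s \mapsto s^{\beta}$ for $\beta<0$ gives $|x|^\beta \ge r^\beta$, hence
\[
\int_{B_r} |v_0|^2 \, dx \le r^{-\beta} \int_{B_r} |x|^\beta |v_0|^2 \, dx \le r^{-\beta}\|v_0\|_{L^{2,\beta}}^2 .
\]
Multiplying by $r^{-\alpha} = r^{\beta}$ and taking the supremum over $r \in (0,1]$ yields $N_{(\alpha)}(v_0) \le \|v_0\|_{L^{2,\beta}}^2 < \infty$. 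Together with the hypothesis $v_0 \in L^2_{\uloc}$, this verifies all hypotheses of Theorem~\ref{thm:main}, which then provides regularity of $v$ and the bound \eqref{0204} on $\Pi_1$.

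It remains to reconcile the localized region $\Pi_1 \subseteq B_{1/2}\times(0,T)$ with the region claimed in Theorem~\ref{thm:0430}, which is phrased over all of $\R^3$. Here I would set $c(v_0) = c_1 N_{(\alpha)}^2$ and choose $T_4 = \min\bigl(T_1,\, c(v_0)(1/2)^{-2\beta}\bigr)$. The point is that the claimed region then lies automatically inside $B_{1/2}$: if $|x| \ge 1/2$, then since $-2\beta > 0$ the map $|x|\mapsto|x|^{-2\beta}$ is increasing, so $c(v_0)|x|^{-2\beta} \ge c(v_0)(1/2)^{-2\beta} \ge T_4$, and no pair $(x,t)$ with $|x| \ge 1/2$ can satisfy $c(v_0)|x|^{-2\beta} \le t < T_4$. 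Thus the set $\{(x,t) : c(v_0)|x|^{-2\beta} \le t < T_4\}$ is contained in $\Pi_1$, and regularity follows there.

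The genuine mathematical content is carried entirely by Theorem~\ref{thm:main}; the remaining work is the two bookkeeping steps above. I expect the only point requiring care to be the last one: the statement is global in $x$, yet the input theorem is confined to $B_{1/2}$, and one must verify that shrinking $T_4$ \emph{empties} the region outside $B_{1/2}$ rather than losing information there. Once that is observed, the constants $c(v_0)$ and $T_4(v_0)$ depend on $v_0$ only through $\|v_0\|_{L^{2,\beta}}$ (via $N_{(\alpha)}$), $\|v_0\|_{L^2_{\uloc}}$, $\beta$, and $T$, as required.
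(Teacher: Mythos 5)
Your proposal is correct and follows essentially the same route as the paper's own proof: reduce to Theorem \ref{thm:main} with $\alpha=-\beta$, bound $N_{(\alpha)}(v_0)\le\|v_0\|_{L^{2,\beta}}^2$ by the pointwise comparison $r^{\beta}\le|x|^{\beta}$ on $B_r$, and then shrink $T_4$ to $\min\bigl(T_1,\,c(v_0)2^{2\beta}\bigr)$ so that the globally stated region is empty outside $B_{1/2}$ and hence contained in $\Pi_1$. The paper makes exactly the same two bookkeeping moves (it writes the constant in terms of $L=\|v_0\|_{L^{2,\beta}}^2$ rather than $N_{(\alpha)}^2$, an immaterial difference), so nothing further is needed.
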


\medskip

The proofs of Theorems \ref{thm:main}, \ref{thm:0209} and \ref{thm:0430} are based on a local-in-space a priori estimate for the scaled local energy %
defined by
\begin{equation}
\label{scaled energy}
E_r(t):=\esssup_{0<s<t}\frac{1}{r^{\alpha}}\int_{B_r} \abs{v(s)}^2+\frac{1}{r^{\alpha}}\int_0^t\int_{B_r} \abs{\nabla v}^2,
\end{equation}
which is useful to focus on the local regularity at the origin
and is used in \cite{KMT21} for the case $\alpha=1$.
In order to deal with the non-local effect of the pressure term, 
we apply the decomposition formula of \cite{LR,KS} to the local energy estimate.
Our key observation is that the (global) $L^2_{\uloc}$ norm of the initial data is sufficient to control the non-local effect 
at least locally in time; see Lemma \ref{lem:main}.
It should be noted that our strategy is different from
previous works \cite{JS, BP,KMT20} and that ours provides a rather direct and transparent approach.

\subsection{Outline of the paper and notation}

In section \ref{preliminaries}, we introduce the notion of local energy  solutions and 
recall the local regularity criterion due to \cite{CKN} 
as well as some technical lemmas.
Section \ref{main results} is devoted to stating and proving our main results including Theorem \ref{thm:main}.

Throughout this paper, $C \in (0,\infty)$ 
denotes an absolute constant which may change line by line.

\section{Preliminaries}
\label{preliminaries}
In this section, we recall some notions about the weak solution to
\eqref{NS} and some results such as the $\epsilon$-regularity theorems
and a priori estimates for the solutions.

For any domain $\Omega \subset \R^3$ and open interval
$I \subset (0,\infty)$, we say $(v,p)$ is a
suitable weak solution in $\Omega \times I$
if
it satisfies \eqref{NS} in the sense of distributions in
$\Omega \times I$,
\begin{align*}
v \in L^\infty(I; L^2(\Omega)) \cap L^2(I; \dot{H}^1(\Omega)), \quad  p \in L^{3/2}(\Omega \times I),
\end{align*}
and the local energy inequality:
\EQS{\label{CKN-LEI}
&\int_{\Omega} |v(t)|^2\phi(t) \,dx +2\int_0^t\!\! \int_{\Omega} |\nabla v|^2\phi\,dx\,dt
\\&\leq %
\int_0^t\!\!\int_{\Omega} |v|^2(\partial_t \phi + \Delta\phi )\,dx\,dt +\int_0^t\!\!\int_{\Omega} (|v|^2+2p)(v\cdot \nabla\phi)\,dx\,dt
}
for  all non-negative $\phi\in C_c^\infty ( \Omega \times I)$.
Note that no boundary condition is required.

We next define the notion of local energy solutions,
The following definition is formulated in \cite{BT19}, which is slightly revised
from the notions of the local Leray solution defined in \cite{LR},
the local energy solution in \cite{KS} and the Leray solution in \cite{JS}.
We refer to \cite[Section 2]{KMT20} for discussion of their relation.

\begin{definition}[Local energy solutions \cite{BT19}]\label{def:localenergy}
A vector field $v\in L^2_{\loc}(\R^3\times [0,T))$ is a local energy solution to \eqref{NS} with divergence free initial data $v_0\in L^2_{\uloc}$ if
\begin{enumerate}
\item for some $p\in L^{3/2}_{\loc}(\R^3\times [0,T))$, the pair $(v,p)$ is a distributional solution to \eqref{NS},

\item for any $R>0$,

\begin{equation}\label{uniform-energy}
\esssup_{0\leq t<\min \{R^2,T \}}\,\sup_{x_0\in \R^3}\, \int_{B_R(x_0 )} |v|^2\,dx
+ \sup_{x_0\in \R^3}\int_0^{\min\{ R^2, T\}}\!\!\!\int_{B_R(x_0)} |\nabla v|^2\,dx \,dt<\infty,
\end{equation}

\item for all compact subsets $K$ of $\R^3$ we have $v(t)\to v_0$ in $L^2(K)$ as $t\to 0^+$,

\item $(v,p)$ satisfies the local energy inequality
\eqref{CKN-LEI}
for  all non-negative functions $\phi\in C_c^\infty (Q)$
with all cylinder $Q$ compactly supported in $\R^3 \times (0,T)$,
\item for every $x_0\in \R^3$ and $r>0$, there exists $c_{x_0,r}\in L^{3/2}(0,T)$ such that
\EQS{\label{decomposition}
        p (x,t)-c_{x_0,r}(t)&=\frac 1 3 |v(x,t)|^2 + \mbox{p.v.}\int_{B_{3r}(x_0)} K(x-y):v(y,t)\otimes v(y,t)\,dy
        \\&+\int_{\R^3\setminus B_{3r}(x_0)} (K(x-y)-K(x_0-y)):v(y,t)\otimes v(y,t)\,dy
}
in $L^{3/2}(B_{2r}(x_0) \times (0,T))$, where $K(x)= \nabla^2(\frac{1}{4\pi|x|})$, and

\item for any compact supported functions $w \in L^2(\R^3)^3$,
\EQ{\label{weak-continuity}
\text{the function}\quad
t \mapsto \int_{\R^3} v(x,t)\cdot w(x)\,dx \quad \text{is continuous on }[0,T).
}

\end{enumerate}
\end{definition}

Let us also recall the uniformly local $L^q$ spaces for $1\le q <\infty$.
We say $f\in L^q_{\uloc}$ if $f\in L^q_\loc(\R^3)$ and
\EQ{\label{Lq-uloc}
\norm{f}_{L^q_{\uloc}} =\sup_{x \in\R^3} \norm{f}_{L^q(B_1(x))}<\infty.
}
\medskip
Local in time existence of local energy solutions for initial data in $L^2_{\uloc}$
and also global existence for initial data in $E^2:=\overline{C_0^\infty}^{L^2_{\uloc}}$ and the weighted $L^2$ spaces
are established in \cite{LR} and \cite{FL, BKT}, respectively. 
One of the advantage of the local energy solution
is that it can be defined even for infinite energy data; 
see  \cite{KT, LR2, JST} and references therein for further  developments and its applications,
and \cite{MMP} for the local energy solutions in the half space.

The next lemma shows  a priori bounds for the
local energy solution, where the crucial part is proved by Lemari\'e-Rieusset~\cite{LR}
and later revised in \cite{KS,JS13}.
The following version is deduced from \cite{KMT20, BT19}.
\begin{lemma}[a priori bound of local energy solutions]
 \label{apriori uloc}
Suppose  $(v,p)$ is a local energy solution to \eqref{NS} defined in 
$\R^3 \times (0,T)$
with divergence free initial data $v_0\in L^2_{\uloc}$.
For any $s,q>1$ with   $\frac 2{s}+ \frac 3{q} = 3$,
there exist  $C(s,q)>0$ and $c_{x_0}(t) \in \R^3$
such that
\begin{align}
\label{ineq.apriorilocal}
E_{\uloc}(t):=\esssup_{0\leq \tau \leq t}
\|v(\tau)\|_{L^2_{\uloc}}^2
+
\sup_{x_0\in \RR^3}\int_0^t\!\int_{B_1(x_0)} |\nabla  v|^2 &\le
2 \|v_0\|_{L^2_{\uloc}}^2,
\\
\label{p.apriori}
\sup_{x_0 \in \R^3}  \norm{p -c_{x_0}}_{L^s(0,t; L^q(B_1(x_0)))}
&\le C(s,q) \|v_0\|_{L^2_{\uloc}}^2
\end{align}
for $t\le T_{\uloc} :=\min \bke 
{T, \frac{c_0}{1+\|v_0\|_{L^2_{\uloc}}^4}}
$
with a universal constant $c_0>0$.
Similar estimates with $B_1$ replaced by $B_r$ are valid.
\end{lemma}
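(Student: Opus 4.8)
The plan is to derive a closed integral inequality for $E_{\uloc}(t)$ directly from the local energy inequality \eqref{CKN-LEI} and then run a continuity argument on a short time interval. First I fix $x_0 \in \R^3$ and insert into \eqref{CKN-LEI} a time-independent cutoff $\phi=\phi_{x_0}\in C_c^\infty(B_2(x_0))$ with $\phi\equiv 1$ on $B_1(x_0)$, $0\le\phi\le 1$ and $\abs{\nb\phi}+\abs{\Delta\phi}\lesssim 1$. The $\partial_t\phi$ term then drops, and after taking $\esssup$ in time and $\sup$ over $x_0$ the left side dominates $E_{\uloc}(t)$ (a finite-overlap cover of $B_2(x_0)$ by unit balls bounds each $L^2(B_2(x_0))$ piece by the $L^2_{\uloc}$ norm).

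Next I estimate the three terms on the right. The diffusion term $\int_0^t\!\int\abs{v}^2\Delta\phi$ is bounded by $C\int_0^t E_{\uloc}(\tau)\,d\tau$. The cubic term $\int_0^t\!\int\abs{v}^3\abs{\nb\phi}$ is handled by the local Gagliardo--Nirenberg inequality $\norm{v}_{L^3(B_2)}\lesssim\norm{v}_{L^2(B_2)}^{1/2}\norm{v}_{H^1(B_2)}^{1/2}$ followed by Young's inequality, yielding a bound of the form $\tfrac14\int_0^t\!\int\abs{\nb v}^2\phi + C\int_0^t\bke{E_{\uloc}+E_{\uloc}^3}\,d\tau$, the first piece being absorbed into the weighted dissipation on the left side of \eqref{CKN-LEI}.

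The pressure term is where the real work lies, and it is the main obstacle. Since $\div v=0$ gives $\int v\cdot\nb\phi=0$, I may replace $p$ by $p-c_{x_0}(t)$ and apply the decomposition \eqref{decomposition} with $r=1$. The near part $\tfrac13\abs{v}^2+\mathrm{p.v.}\int_{B_3(x_0)}K:v\otimes v$ is controlled by the Calder\'on--Zygmund bound in $L^{3/2}$, producing one more cubic contribution $\lesssim\int\abs{v}^3$ treated as above. The far part uses $\abs{K(x-y)-K(x_0-y)}\lesssim\abs{x_0-y}^{-4}$ for $x\in B_2(x_0)$ and $\abs{y-x_0}\ge 3$; summing the resulting dyadic series $\sum_k 2^{-k}$ against the uniformly-local mass gives a pointwise bound $\lesssim\norm{v(t)}_{L^2_{\uloc}}^2\lesssim E_{\uloc}(t)$, hence a contribution $\lesssim\int_0^t E_{\uloc}\,d\tau$. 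Everything here hinges on the kernel-difference bound making the tail summable, so that the \emph{global} $L^2_{\uloc}$ norm, rather than any local quantity, absorbs the nonlocal interaction.

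Collecting the estimates yields $E_{\uloc}(t)\le\norm{v_0}_{L^2_{\uloc}}^2+C\int_0^t\bke{E_{\uloc}+E_{\uloc}^3}\,d\tau$. Writing $A=\norm{v_0}_{L^2_{\uloc}}^2$, a standard continuity argument shows that $E_{\uloc}(\tau)\le 2A$ persists as long as $Ct(2A+8A^3)\le A$, i.e. as long as $t\le c_0/(1+A^2)=c_0/(1+\norm{v_0}_{L^2_{\uloc}}^4)$, which is exactly $T_{\uloc}$; the quadratic-in-$A$ time threshold is what produces the fourth power. Finally \eqref{p.apriori} follows from the same decomposition: interpolating the now-established control $v\in L^\infty_t L^2_{\uloc}\cap L^2_t\dot{H}^1_{\uloc}$ gives $\norm{v}_{L^{2s}_tL^{2q}_{\uloc}}\lesssim\sqrt{A}$ along the scaling line $\tfrac2s+\tfrac3q=3$, so the near part contributes $\lesssim\norm{v}_{L^{2s}L^{2q}}^2\lesssim A$, while the far part is bounded on the finite interval in $L^s_tL^\infty_x(B_1(x_0))$ by $\lesssim\norm{v}_{L^\infty_t L^2_{\uloc}}^2\lesssim A$, giving $\sup_{x_0}\norm{p-c_{x_0}}_{L^s(0,t;L^q(B_1(x_0)))}\le C(s,q)A$. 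The $B_r$ version is obtained by the natural rescaling $x\mapsto x/r$.
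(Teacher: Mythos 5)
Your proposal is correct in substance, and it is essentially the proof that the paper itself never writes out: for this lemma the paper only points to Lemari\'e-Rieusset \cite{LR}, Kikuchi--Seregin \cite{KS}, Jia--\v Sver\'ak \cite{JS13} and \cite{KMT20,BT19}, and the argument in those references is the one you reconstruct --- spatial cutoff in the local energy inequality, Gagliardo--Nirenberg plus Young for the cubic term, near/far splitting of the pressure via \eqref{decomposition} with the kernel-difference decay $|K(x-y)-K(x_0-y)|\lesssim |y-x_0|^{-4}$ making the tail summable against the $L^2_{\uloc}$ mass, and a Gronwall-type closure (the paper's Lemma \ref{gronwall} with $m=3$) that produces exactly the time scale $c_0/(1+\|v_0\|_{L^2_{\uloc}}^4)$. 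Your interpolation argument for \eqref{p.apriori} along the line $\frac 2s+\frac 3q=3$ is also the standard one and the exponents do close ($s\theta=1$, so the dissipation integral is used with power exactly $2$).

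Two steps are stated too loosely, though both repairs are routine. First, Definition \ref{def:localenergy}(4) grants the local energy inequality only for test functions compactly supported in $\R^3\times(0,T)$, i.e.\ vanishing near $t=0$; a time-independent cutoff is not admissible, so the term $\|v_0\|_{L^2_{\uloc}}^2$ does not appear for free --- it must be produced by multiplying your $\phi$ by a temporal ramp and passing to the limit using property (3), the strong $L^2_{\loc}$ convergence $v(t)\to v_0$. Second, the Gagliardo--Nirenberg inequality is applied on $B_2(x_0)$ (the support of $\nb\phi$), where $\phi$ has no positive lower bound, so the Young-split gradient piece is $\ep\int_0^t\int_{B_2(x_0)}|\nb v|^2$ and cannot be absorbed into $\int_0^t\int|\nb v|^2\phi$ as you claim; the correct bookkeeping takes the supremum over $x_0$ of the whole inequality first and then absorbs $\ep N\sup_{x_0}\int_0^t\int_{B_1(x_0)}|\nb v|^2$ (with $N$ the finite covering number you already invoke) into the dissipation part of $E_{\uloc}$ on the left. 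With these fixes --- and noting that the far-field pressure term actually contributes $E_{\uloc}^{3/2}\le E_{\uloc}+E_{\uloc}^3$ rather than $E_{\uloc}$, which changes nothing in the Gronwall structure --- your argument goes through and matches the cited proof.
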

We now recall the scaled version of the $\epsilon$-regularity theorem of Caffarelli-Kohn-Nirenberg \cite[Proposition 1]{CKN}. It is formulated in the present form in \cite{NRS,Lin}.
\begin{lemma}\label{CKN-Prop1}
There are absolute constants $\e_{\CKN}$ and $C_{\CKN}>0$ with the following property. Suppose that $(v,p)$ is a suitable weak solution of (NS)
 in $B_r(x_0) \times (t_0-r^2,t_0))$, $r>0$, with
\[
\frac 1{r^{2}} \int^{t_0}_{t_0-r^2} \int_{B_{r}(x_0)} |v|^3
+ |p|^{3/2} dx\,dt \le \e
\]
for some $\e \le \e_{\CKN}$,
then $v  \in L^\infty(B_{\frac{r}{2}(x_0)} \times (t_0-\frac{r^2}{4},t_0))$ and
\EQ{\label{CKN-Prop1-eq1}
\norm{v}_{L^\infty(B_{\frac{r}{2}(x_0)} \times (t_0-\frac{r^2}{4},t_0))} \le \frac {C_{\CKN}\e^{\frac13}} {r} .
}
\end{lemma}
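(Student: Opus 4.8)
The plan is to prove the lemma by a parabolic rescaling reduction followed by an iteration of scale-invariant quantities on dyadically shrinking cylinders $Q_\rho:=B_\rho(x_0)\times(t_0-\rho^2,t_0)$. First I use the invariance $v\mapsto\lambda v(\lambda x,\lambda^2 t)$, $p\mapsto\lambda^2 p(\lambda x,\lambda^2 t)$ of \eqref{NS} to reduce to $r=1$, $x_0=0$, $t_0=0$: the hypothesis quantity $r^{-2}\int_{Q_r}(|v|^3+|p|^{3/2})$ is scale invariant and the target bound $\norm{v}_{L^\infty(Q_{r/2})}\le C_{\CKN}\e^{1/3}/r$ is covariant, so nothing is lost. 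To expose the exponent $1/3$ transparently I then normalise $w=\e^{-1/3}v$, $q=\e^{-2/3}p$, so that $\int_{Q_1}(|w|^3+|q|^{3/2})\le 1$ while $(w,q)$ solves \eqref{NS} with the nonlinearity $w\cdot\nb w$ carrying a prefactor $\e^{1/3}$; as $\e\to0$ the system linearises toward Stokes, and the goal becomes the $\e$-uniform bound $\norm{w}_{L^\infty(Q_{1/2})}\le C$.

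On the normalised problem I introduce, for $0<\rho\le1$, the scale-invariant quantities
\[
A(\rho)=\sup_{-\rho^2<t<0}\tfrac1\rho\int_{B_\rho}|w(t)|^2,\qquad
E(\rho)=\tfrac1\rho\int_{Q_\rho}|\nb w|^2,
\]
\[
G(\rho)=\tfrac1{\rho^2}\int_{Q_\rho}|w|^3,\qquad
H(\rho)=\tfrac1{\rho^2}\int_{Q_\rho}|q-\bar q_\rho|^{3/2},
\]
with $\bar q_\rho$ the spatial average. The iteration rests on three estimates. First, testing the local energy inequality \eqref{CKN-LEI} with a cutoff adapted to $Q_{\rho/2}\subset Q_\rho$ bounds $A(\rho/2)+E(\rho/2)$ by $G(\rho)^{2/3}+G(\rho)+G(\rho)^{1/3}H(\rho)^{2/3}$. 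Second, the Gagliardo--Nirenberg--Sobolev inequality in $\R^3$ yields a sub-scale interpolation $G(\theta\rho)\lesssim\theta^{\,3}\big(A(\rho)+E(\rho)\big)^{3/2}$, so that $G$ at the smaller radius is controlled by $A+E$ at radius $\rho$ with a favourable power of $\theta\in(0,1)$. Third, for the pressure I use the decomposition \eqref{decomposition}: the near-field term is a Calder\'on--Zygmund transform of $w\otimes w$ and is dominated by $G(\rho)$, while the far-field term is harmonic in $B_{2\rho}$ (the kernel $K(x-y)=\nb^2\frac1{4\pi|x-y|}$ being harmonic in $x$ off the diagonal), so by interior gradient estimates for harmonic functions its oscillation decays by a positive power of $\theta$; this gives $H(\theta\rho)\lesssim\theta^{\sigma}H(\rho)+\theta^{-\tau}G(\rho)$ for some $\sigma,\tau>0$.

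Combining the three estimates for the functional $\Phi(\rho):=G(\rho)+H(\rho)$ produces a recursion of the form $\Phi(\theta\rho)\le C_0\theta^{\sigma}\Phi(\rho)+C_0\,\Phi(\rho)^{3/2}$, whose leading term is linear and whose remainder is superlinear. I first fix $\theta\in(0,1/2)$ with $C_0\theta^{\sigma}\le\tfrac14$, and only then choose $\e_{\CKN}$ so small that $C_0\e_{\CKN}^{1/2}\le\tfrac14$; since $\Phi(1)\le 1$ and $\Phi$ decreases along the iteration, the bound $\Phi\le\e_{\CKN}$ is preserved and I obtain the geometric decay $\Phi(\theta^k)\le 2^{-k}\Phi(1)$, hence $\Phi(\rho)\lesssim\rho^{\kappa}$ as $\rho\to0$. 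This decay forces $E(\rho)\to0$, so $(0,0)$ is a regular point by the gradient-type Caffarelli--Kohn--Nirenberg criterion; equivalently, the smallness of $\rho^{-2}\int_{Q_\rho}(|w|^3+|q|^{3/2})$ at every small scale triggers the standard parabolic bootstrap (higher integrability, then $L^\infty$). Tracking the homogeneity gives $\norm{w}_{L^\infty(Q_{1/2})}\le C$, and undoing the normalisation and scaling yields $\norm{v}_{L^\infty(Q_{r/2})}\le C_{\CKN}\e^{1/3}/r$.

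I expect the pressure step to be the main obstacle: since $p$ is nonlocal its sub-scale decay is not automatic and must be extracted from the near/far splitting \eqref{decomposition}, with the far-field tail handled through the harmonicity of $K$ and interior estimates; the second delicate point is the order of quantifiers in closing the recursion, namely choosing $\theta$ first to defeat the linear decay and only afterward $\e_{\CKN}$ to absorb the superlinear $\Phi^{3/2}$ contribution. An alternative making the exponent $1/3$ entirely transparent is the compactness argument of \cite{Lin}: if the decay step failed along a sequence with $\int_{Q_1}(|v_k|^3+|p_k|^{3/2})=\e_k\to0$, one normalises by $\e_k^{1/3}$ exactly as above, extracts a weak limit solving the Stokes system, and contradicts the failure of decay using interior Stokes regularity, thereby trading the explicit interpolation bookkeeping for a single linearisation.
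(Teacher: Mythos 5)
First, note that the paper does not prove this lemma at all: it is recalled as a known result of \cite{CKN}, in the quantitative form of \cite{NRS,Lin}, so your attempt can only be measured against the standard proofs in those references. Your overall architecture (parabolic rescaling, the $\e^{1/3}$-normalization that linearises toward Stokes, iteration of the scale-invariant quantities $A,E,G,H$ with a near/far pressure splitting, plus the compactness alternative) is indeed the classical blueprint. But the central recursion has a genuine gap. The interpolation inequality is mis-stated: the correct bound is $G(\theta\rho)\le C\bkt{\theta^3 A(\rho)^{3/2}+\theta^{-3/2}A(\rho)^{3/4}E(\rho)^{3/4}}$, and the gradient term \emph{necessarily} carries a negative power of $\theta$ (there is no mechanism gaining smallness in $\theta$ from $\nb w$ alone); your claimed $G(\theta\rho)\lec\theta^{3}\bke{A(\rho)+E(\rho)}^{3/2}$ is false. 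This matters because the local energy inequality \eqref{CKN-LEI} gives $A+E\lec G^{2/3}+G+G^{1/3}H^{2/3}$, whose \emph{sublinear} term $G^{2/3}$ re-enters the corrected interpolation through $A^{3/4}E^{3/4}$ as a term of size $\theta^{-3/2}G(\rho)$ --- linear in $\Phi$ with a \emph{large} constant. Hence the recursion $\Phi(\theta\rho)\le C_0\theta^{\sigma}\Phi(\rho)+C_0\Phi(\rho)^{3/2}$ does not follow from your three estimates, and no order of choosing $\theta$ first and $\e_{\CKN}$ second repairs it. This is precisely why the CKN iteration is delicate: one must iterate a decaying ansatz at the \emph{critical} rate (roughly $G(\rho)\lec\e\,\rho^{3}$, the rate of a bounded function, with matching ans\"atze for $A$, $E$, and the pressure, and with the means of $v$ tracked across scales), or run Lin's compactness decay step with mean subtraction, since a weak limit solving Stokes only yields decay of \emph{oscillations}, not of $G$ itself.

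Even granting your recursion, the endgame is both too weak and circular. A decay $\Phi(\rho)\lec\rho^{\kappa}$ with small $\kappa$ does not give \eqref{CKN-Prop1-eq1}: the bound $\norm{w}_{L^\infty(Q_{1/2})}\le C$ at Lebesgue points requires control of the $L^3$ \emph{averages}, i.e.\ $\rho^{-5}\int_{Q_\rho(z)}|w|^3\le C$ uniformly, which is decay of $G$ at exactly the rate $\rho^{3}$ --- your $\kappa$ falls far short. And the appeal to the ``gradient-type criterion'' ($\limsup_\rho \rho^{-1}\int_{Q_\rho}|\nb v|^2$ small implies regularity) is circular: in \cite{CKN,Lin} that criterion is \emph{deduced from} the present lemma, so it cannot be used to prove it; similarly, ``smallness at every scale triggers the standard bootstrap'' is essentially a restatement of the lemma. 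A smaller point: the decomposition \eqref{decomposition} is part of the definition of local energy solutions on $\R^3\times(0,T)$ and is not available for a suitable weak solution given only on one cylinder; one must instead decompose $p$ locally via $-\De p=\pd_i\pd_j(v_iv_j)$ in $B_\rho$ into a Calder\'on--Zygmund near-field part and a harmonic remainder --- your harmonic-decay mechanism for $H$ is the right idea, but its stated source is wrong. The compactness alternative you sketch at the end is the viable route (it is essentially \cite{Lin}), but it, too, needs the mean-subtraction and mean-drift bookkeeping above to produce the constant $C_{\CKN}\e^{1/3}/r$; as written, your primary argument does not close.
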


We recall a useful Gronwall-type inequality from  \cite[Lemma 2.2]{BT19}.
\begin{lemma}\label{gronwall}
Suppose $f \in L^\infty_{\loc}([0, T_0); [0,\infty))$
(which may be discontinuous) satisfies, for some
 $a$, $b>0$, and $m \ge 1$,%
$$
f(t) \le a + b
\int^t_0
(f(s) + f(s)^m)ds \qquad \mbox{for}\  t \in (0, T_0),
$$
then we have $f(t) \le 2a$ for $t \in (0, T)$
with
$$
T=\min\left(T_0,\, \frac{C}{b(1+a^{m-1})}\right),
$$
where $C$ is a universal constant.
\end{lemma}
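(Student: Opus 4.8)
The plan is to run a continuity/bootstrap argument. Since $f$ lies only in $L^\infty_{\loc}$ and may be discontinuous, I would not bootstrap directly on $f$; instead I would transfer everything to the continuous majorant
\[
G(t):=a+b\int_0^t\bke{f(s)+f(s)^m}\,ds .
\]
Because $f,f^m\in L^\infty_{\loc}([0,T_0))\subset L^1_{\loc}$, the function $G$ is well defined, continuous, and nondecreasing on $[0,T_0)$, with $G(0)=a$; moreover the hypothesis gives $f(t)\le G(t)$ for every $t\in(0,T_0)$. This is the key structural point: continuity is needed only for the integral, so the discontinuity of $f$ is harmless.

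Next I would introduce the maximal time for which the majorant stays below $2a$,
\[
\tau:=\sup\bket{t\in[0,T_0):\ G(s)\le 2a\ \text{for all }s\in[0,t)} .
\]
Since $G$ is continuous with $G(0)=a<2a$ (using $a>0$), we have $\tau>0$, and on $[0,\tau)$ the bound $f(s)\le G(s)\le 2a$ holds; as $x\mapsto x+x^m$ is increasing on $[0,\infty)$ this yields the pointwise estimate $f(s)+f(s)^m\le 2a+(2a)^m$.

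The heart of the argument is then a self-improvement on $[0,\tau]$: feeding the last bound into the definition of $G$ gives $G(t)\le a+b\,t\,\bke{2a+(2a)^m}$ for $t\le\tau$. If $\tau<T_0$, continuity of $G$ forces $G(\tau)=2a$, and evaluating the previous inequality at $t=\tau$ gives $a\le b\,\tau\,\bke{2a+(2a)^m}$, hence
\[
\tau\ \ge\ \frac{a}{b\bke{2a+(2a)^m}}\ =\ \frac{1}{2b\bke{1+(2a)^{m-1}}}\ \ge\ \frac{C}{b\bke{1+a^{m-1}}},
\]
where the final inequality is the elementary bound $1+(2a)^{m-1}\le 2^{m-1}\bke{1+a^{m-1}}$ valid for $m\ge 1$; this is the step that isolates the constant. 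In the complementary case $\tau\ge T_0$ one already has $f\le 2a$ on all of $[0,T_0)$. In both cases, taking $T=\min\bke{T_0,\ \tfrac{C}{b(1+a^{m-1})}}$ ensures $T\le\tau$, and therefore $f(t)\le G(t)\le 2a$ for all $t\in(0,T)$, which is the assertion.

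I expect the only genuine subtlety to be precisely the discontinuity of $f$, handled by passing to $G$ as above; everything else---the monotonicity of $x+x^m$, the one-line self-improving estimate, and the elementary inequality extracting $C$---is routine. The one point worth flagging is bookkeeping of the constant: the reduction of $1+(2a)^{m-1}$ to a multiple of $1+a^{m-1}$ costs a factor $2^{m-1}$, so the extracted $C$ is independent of $a$ and $b$ and depends only on $m$ (e.g.\ $C=2^{-m}$ works), which suffices since $m$ is fixed in every application.
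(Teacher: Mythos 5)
Your proof is correct, and there is no in-paper argument to diverge from: the paper states this lemma without proof, quoting it from \cite[Lemma 2.2]{BT19}, and your continuity/bootstrap argument through the continuous majorant $G(t)=a+b\int_0^t(f+f^m)$ is essentially the standard proof of that cited result, including the key device of transferring the bound from the possibly discontinuous $f$ to $G$. Your closing remark about the constant is worth keeping and is more than bookkeeping: with $C$ genuinely independent of $m$ the statement would be false for large $m$ (comparison with the ODE $f'=bf^m$, $f(0)=a$, shows the doubling time degenerates like $((m-1)\,b\,a^{m-1})^{-1}$), so the ``universal constant'' wording should be read as $C=C(m)$, e.g.\ $C=2^{-m}$ as you extract; this is harmless in the paper, which only invokes the lemma with $m=3$ in the proof of Lemma \ref{lem:main}.
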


Finally we also show the following elementary bound for the scaled energy.
\begin{lemma}
\label{lemmaA}

Assume that
 $f \in L^2_{\loc}(\R^3)$ and let $N_R=N_{(\alpha), R}(f):= 
 \displaystyle{\sup_{r \in (R, 1]}\frac{1}{r^\alpha}}\int_{B_r}|f|^2 <\infty$
 for some $\alpha>0$ and $R\in [0,\frac12]$. 
\begin{enumerate}
\item[(i)]
 If  $\delta \ge 2^\alpha N_R$, then for any $x_0 \in B_{\frac12}$
  we have
  \begin{equation}
\label{N_R}
\sup_{R(x_0)< r\le {1-|x_0|} }
\frac{1}{r^\alpha}\int_{B_r(x_0)}|f|^2
\le \delta
\end{equation}
with $R(x_0)=\max\bke{{\frac R2}, 2\bke{\frac{N_R}{\delta}}^{\frac1{\alpha}}|x_0| }$.

\item[(ii)]
 If $\alpha \ge 1$ and $\delta \ge 2 N_R$, 
then for any $x_0 \in B_{\frac12}$
  we have
  \begin{equation}
%\label{N_R}
\sup_{R'(x_0)< r\le {1-|x_0|} }
\frac{1}{r}\int_{B_r(x_0)}|f|^2
\le \delta
\end{equation}
with $R'(x_0)=\max\bke{{\frac R2}, 
\frac{2^\alpha N_R}{\delta}|x_0|^{\alpha} }$.

\end{enumerate}
\end{lemma}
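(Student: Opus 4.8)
The plan is to reduce everything to the origin-centred quantity $\Phi(\rho):=\int_{B_\rho}|f|^2$, which is exactly what the hypothesis $N_R<\infty$ controls, via the elementary containment $B_r(x_0)\subseteq B_{|x_0|+r}(0)$, valid for every $x_0$ and $r>0$. First I would record the two facts about $\Phi$ that I need: it is nondecreasing and, since $|f|^2\in L^1_{\loc}$, continuous, so that the defining bound $\Phi(\rho)\le N_R\rho^\alpha$ for $\rho\in(R,1]$ extends, by letting $\rho\downarrow R$, to $\Phi(R)\le N_R R^\alpha$. Writing $\rho=|x_0|+r$, the hypotheses $x_0\in B_{1/2}$ and $r\le 1-|x_0|$ guarantee $\rho\le1$, so $\int_{B_r(x_0)}|f|^2\le\Phi(\rho)$ is governed by one of these two bounds according to whether $\rho>R$ or $\rho\le R$.

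For part (i), in the regime $\rho>R$ I would use $\frac{1}{r^\alpha}\int_{B_r(x_0)}|f|^2\le N_R\bke{1+\frac{|x_0|}{r}}^\alpha$, which is decreasing in $r$, so it suffices to control it at the smallest admissible $r$. The threshold $R(x_0)\ge 2(N_R/\delta)^{1/\alpha}|x_0|$ gives $|x_0|/r\le\frac12(\delta/N_R)^{1/\alpha}$ throughout $r>R(x_0)$, and the standing assumption $\delta\ge 2^\alpha N_R$ forces $\frac12(\delta/N_R)^{1/\alpha}\ge1$; adding these yields $1+|x_0|/r\le(\delta/N_R)^{1/\alpha}$, hence the claim. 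In the complementary regime $\rho\le R$ I would instead use $\Phi(\rho)\le N_R R^\alpha$ together with $r>R(x_0)\ge R/2$, giving $\frac{1}{r^\alpha}\int_{B_r(x_0)}|f|^2< 2^\alpha N_R\le\delta$.

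Part (ii) follows the same two-regime split, but now with the weight $\frac1r$ rather than $\frac{1}{r^\alpha}$. The regime $\rho\le R$ is handled as before: $\Phi(\rho)\le N_R R^\alpha$ and $r>R/2$ give $\frac1r\int_{B_r(x_0)}|f|^2<2N_R R^{\alpha-1}\le 2N_R\le\delta$, using $R\le1$ and $\alpha\ge1$. In the regime $\rho>R$ one has $\frac1r\int_{B_r(x_0)}|f|^2\le N_R h(r)$ with $h(r):=(|x_0|+r)^\alpha/r$; unlike in part (i), $h$ need not be monotone, and a short computation shows that for $\alpha>1$ it is decreasing then increasing with a single interior minimum at $r=|x_0|/(\alpha-1)$ (and simply decreasing when $\alpha=1$), so its supremum over the admissible $r$-interval is attained at an endpoint. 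At the right endpoint $r=1-|x_0|$ one gets $h=1/(1-|x_0|)\le2$, hence $N_R h\le 2N_R\le\delta$. At the left endpoint the threshold is designed precisely so that when the term $r_0:=\frac{2^\alpha N_R}{\delta}|x_0|^\alpha$ dominates $R'(x_0)$ one has, using $\delta\ge 2N_R$, $|x_0|\le\frac12$, $\alpha\ge1$, the inequality $r_0\le|x_0|$, whence $|x_0|+r_0\le 2|x_0|$ and $h(r_0)\le (2|x_0|)^\alpha/r_0=\delta/N_R$; the remaining boundary possibilities (the term $R/2$ dominating, or $\rho$ crossing $R$ at $r=R-|x_0|$) reduce to the same elementary estimates using $r>R/2$.

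The conceptual content is minimal; the genuine work — and the only place I expect to have to be careful — is the bookkeeping in part (ii): tracking which of the two terms in the $\max$ defining $R'(x_0)$ is active, whether $\rho=|x_0|+r$ lies above or below $R$ across the $r$-interval, and verifying that in every combination the endpoint value of $h$ is at most $\delta/N_R$. None of these cases is difficult, but all must be checked to cover the full range $R\in[0,\frac12]$, the case $R=0$ being the simplest since then $\rho>R$ holds automatically and only the decreasing/unimodal endpoint estimate is needed.
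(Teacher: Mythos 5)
Your proof is correct, but it follows a genuinely different decomposition from the paper's. The paper never considers the ball $B_{|x_0|+r}(0)$: it splits into the three cases $R(x_0)<r<|x_0|$, $|x_0|\le r\le \frac12$, and $\frac12<r\le 1-|x_0|$, using the containments $B_r(x_0)\subset B_{2|x_0|}(0)$, $B_r(x_0)\subset B_{2r}(0)$, and $B_r(x_0)\subset B_1(0)$ respectively; since each of the radii $2|x_0|$, $2r$, $1$ automatically lies in $(R,1]$ in its case, only the defining inequality for $N_R$ is ever invoked, and part (ii) is dispatched by the same split. You instead work with $\rho=|x_0|+r$ throughout: in (i) the single bound $N_R\left(1+\frac{|x_0|}{r}\right)^\alpha$, together with the two half-estimates $1\le \frac12(\delta/N_R)^{1/\alpha}$ and $|x_0|/r\le \frac12(\delta/N_R)^{1/\alpha}$, replaces the paper's case analysis and makes transparent why the factor $2$ appears in $R(x_0)$; in (ii) the unimodality of $h(r)=(|x_0|+r)^\alpha/r$ reduces everything to endpoint evaluations, and the three possible left endpoints ($r_0$, $R/2$, $R-|x_0|$) all close as you indicate (for $a=R/2$ one uses $R/2\ge r_0$ and $|x_0|\ge R/2$ to get $h(R/2)\le (2|x_0|)^\alpha/r_0=\delta/N_R$; for $a=R-|x_0|$ one gets $h\le 2R^{\alpha-1}\le 2$). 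The price of your route is that $\rho$ can dip below $R$, forcing the extra step $\Phi(R)\le N_R R^\alpha$; note that monotonicity of $\Phi$ alone already gives this, via $\Phi(R)\le \inf_{\rho>R} N_R\rho^\alpha$, so continuity is not actually needed. One degenerate point worth a line: the ratios $\delta/N_R$ presuppose $N_R>0$, but if $N_R=0$ then $f=0$ a.e.\ on $B_1$ and both statements are trivial. What your organization buys is a unified argument in (i) and a systematic identification of the worst radii in (ii); what the paper's buys is that it never needs any information about $\int_{B_\rho}|f|^2$ for $\rho\le R$ and uses no calculus, only the three containments.
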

\begin{proof}~(i)
Assume that $r \in (R(x_0), 1-|x_0|]$. {If $1/2<r\le 1-|x_0|$,
\[
\frac1{r^{\alpha}}\int_{B_r(x_0)} |f|^2
\le \frac{1}{r^{\alpha}}\int_{B_{1}(0)} |f|^2
\le \frac{1}{r^{\alpha}} N_R \le \delta.
\]
If $|x_0|\le r\le 1/2$,}
\eqref{N_R} clearly holds by the following estimate:
$$
\frac1{r^{\alpha}}\int_{B_r(x_0)} |f|^2
\le \frac{1}{r^{\alpha}}\int_{B_{2r}(0)} |f|^2
\le 2^{\alpha}N_R \le \delta,
$$
where we have used $|x_0| +r \le 2r$. 
Finally, if $R(x_0)<\ r <|x_0|$ (this case is empty if $|x_0|\le R/2\le R(x_0)$), we have $|x_0|+r <2|x_0|$,
and hence
\begin{align*}
\frac1{r^{\alpha}}\int_{B_r(x_0)} |f|^2
&\le
\frac1{r^{\alpha}}\int_{B_{2|x_0|}(0)} |f|^2
\le
\bke{\frac{2|x_0|}{r}}^{\alpha} N_R.
\end{align*}
The right hand side is bounded by $\delta$ since
$r >R(x_0)\ge 2\bke{\frac{N_R}{\delta}}^{\frac1{\alpha}}|x_0|$.
Therefore we have verified \eqref{N_R}.%
\\
(ii) We note that if $R/2<r<|x_0|$,  
\begin{align*}
\frac1{r}\int_{B_r(x_0)} |f|^2
&\le
\frac1{r}\int_{B_{2|x_0|}(0)} |f|^2
\le
\frac{(2|x_0|)^{\alpha}}{r} N_R.
\end{align*}
Hence the right hand side is bounded by $\delta$ when
$r \ge \frac{2^\alpha N_R}{\delta} |x_0|^{\alpha}$. 
The other case is similar to the proof of (i), and we omit the details.
\end{proof}

\begin{remark}
\label{K_2}
In our previous paper \cite{KMT20}, we have used
the scale critical Herz norm in $\R^3$:
$$
\|f\|_{K^{-1+\frac3p}_{p,\infty}}:=\sup_{x_0 \neq 0} |x_0|^{1-\frac{3}{p}}
\|f\|_{L^p(B_{\frac{|x_0|}{2}}(x_0))}
$$
for $p\ge 3$. 
We easily observe that
$$
\|f\|_{K^{\frac12}_{2,\infty}}^2 \le C\sup_{r>0}\frac{1}{r}\int_{B_r}|f|^2dx \le C'\|f\|
_{K^{\frac12}_{2,\infty}}^2
$$
with some absolute constants $C$ and $C'>0$.

\end{remark}

\section{Main results}
\label{main results}

In this section we prove our main results, Theorems \ref{thm:main}-\ref{thm:0430}, regarding the regularity of the local energy solutions.
They are obtained as consequences of the following theorem.

\begin{theorem}
\label{large data}
Let $v$ be a local energy solution of \eqref{NS} in $\R^3 \times (0,T)$ associated with 
initial data  $v_0 \in L^2_{\uloc}$, and
\[
N_R=N_{(\alpha),R}(v_0):= 
\sup_{r \in (R, 1]} \frac{1}{r^\alpha}\int_{B_r(0)}|v_0|^2<\infty
\]
 for some $\alpha \in (1,4)$ and $R \in [0,\frac12]$.
There are absolute constants $c$ and $C>0$ such that the following holds.
\begin{enumerate}
\item[(i)]
If $N_R \le 1$ and $R \le c \sqrt{S_1}$ with 
$$
S_1:= \min \bke{T, 
\frac{c  }{1+\|v_0 \|_{L^2_{\uloc}}^{12} }
},
$$
 then we have 
\EQS{
 |v(x,t)|\le C \bkt{ 1 +  \|v_0 \|_{L^2_{\uloc}}  S_1^{\frac14(4-\al)} }
  t^{\frac14 (\alpha-3)}
\quad \text{ for } \  |x|\le \tfrac 12, \quad
c\max \bke{R^2, N_R ^{2/\al}|x|^2 } \le t  \le S_1.}
\item[(ii)]
If $N_R \ge 1$, assume 
 $R \le cN_R \sqrt{S_2}$ with
\EQ{\label{T2.def} 
S_2= \min \bke{T, 
\frac{c }{1+\|v_0 \|_{L^2_{\uloc}}^{12} },
  \bke{\frac{c}{N_R} }^{\frac{2\al+2}{\alpha -1}},
  \frac{c}{(1+\|v_0\|_{L^2_{\uloc}}^{4/3})N_R^2}},
}
 then we have 
\EQS{
 |v(x,t)|\le C \bkt{ N_R^{(\al+1)/2} + N_R^{3/2} \|v_0 \|_{L^2_{\uloc}}  S_2^{\frac14(4-\al)} }
  t^{\frac14 (\alpha-3)}
\\
\text{ for }\quad |x|\le \tfrac 12, \quad
\frac{c}{N_R^2}
 \max \bke{R^2, |x|^2 } \le t  \le S_2.
}
\end{enumerate}
\end{theorem}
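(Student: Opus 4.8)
The plan is to deduce both estimates from the Caffarelli--Kohn--Nirenberg $\epsilon$-regularity criterion (Lemma~\ref{CKN-Prop1}), after showing that the scaled local energy $E_r$ centered at an arbitrary point of $B_{1/2}$ stays controlled on a suitable parabolic cylinder. I would first establish (i) under the normalization $N_R \le 1$, and then obtain (ii) by the Navier--Stokes scaling $v \mapsto \lambda v(\lambda\,\cdot,\lambda^2\,\cdot)$, which sends $N_R$ to $\sim\lambda^{\alpha-1}N_{\lambda R}$ and $t$ to $\lambda^2 t$; choosing $\lambda \sim N_R^{-1/(\alpha-1)}$ normalizes the scaled energy to $\le 1$, and the prices paid in $\|v_0\|_{L^2_{\uloc}}$ (which degrades like $\lambda^{-1/2}$) and in the time window account for the extra powers of $N_R$ in \eqref{T2.def} and in the prefactor of (ii).

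The heart of (i) is an a priori bound for $E_r(t)$. Testing the local energy inequality \eqref{CKN-LEI} against a cutoff $\phi$ with $\phi \equiv 1$ on $B_r$, $\supp\phi \subset B_{2r}$, $|\nabla\phi|\lesssim r^{-1}$, $|\Delta\phi|\lesssim r^{-2}$, and dividing by $r^\alpha$, the linear terms are absorbed into $E_{2r}$, while the cubic term $r^{-1-\alpha}\int_0^t\!\int_{B_{2r}}|v|^3$ is handled by the three-dimensional interpolation $\|v\|_{L^3(B_{2r})}\lesssim \|\nabla v\|_{L^2}^{1/2}\|v\|_{L^2}^{1/2} + r^{-1/2}\|v\|_{L^2}$ together with Hölder in time, producing a superlinear expression in $E_{2r}$. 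For the pressure I would insert the decomposition \eqref{decomposition}: the singular and local parts are estimated exactly like the velocity nonlinearity, whereas the far-field part is the genuinely nonlocal contribution. The key point, formalized in Lemma~\ref{lem:main}, is that this far-field term is dominated by the \emph{global} quantity $\|v\|_{L^2_{\uloc}}^2$, which is in turn bounded by $2\|v_0\|_{L^2_{\uloc}}^2$ via Lemma~\ref{apriori uloc} on $(0,T_{\uloc})$. Collecting terms yields an inequality of the form $E_r(t) \le C N_{2r}(v_0) + Cb\int_0^t (E_{2r}+E_{2r}^m)\,ds$, to which Lemma~\ref{gronwall} applies, giving $E_r(t)^{1/2} \lesssim 1 + \|v_0\|_{L^2_{\uloc}}\,t^{(4-\alpha)/4}$ for $t \le S_1$.

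To pass to the pointwise bound I translate the estimate to balls centered at $x_0 \in B_{1/2}$. Lemma~\ref{lemmaA}(i) converts the origin-centered control of $N_R$ into $\sup_{r} r^{-\alpha}\int_{B_r(x_0)}|v_0|^2 \le \delta$ for $r > \max(R/2,\, 2(N_R/\delta)^{1/\alpha}|x_0|)$, and the same a priori argument then bounds the scaled energy on balls about $x_0$ for such $r$. Choosing $r \sim \sqrt{t_0}$ at a point $(x_0,t_0)$ with $c\max(R^2, N_R^{2/\alpha}|x_0|^2)\le t_0$ places $r$ in the admissible range, which is exactly the parabolic region in the statement. Feeding the bound on $E_r$ into the interpolation gives $r^{-2}\int_{Q_r(x_0)}|v|^3 + |p|^{3/2} \lesssim r^{3(\alpha-1)/2}E_r^{3/2}$, so the CKN smallness hypothesis holds for small $t_0$; Lemma~\ref{CKN-Prop1} then yields $|v(x_0,t_0)| \lesssim r^{-1}(r^{3(\alpha-1)/2}E_r^{3/2})^{1/3} \sim t_0^{(\alpha-3)/4}E_r^{1/2}$, which is precisely the asserted decay with the stated prefactor.

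The main obstacle is the pressure: controlling the far-field part of \eqref{decomposition} uniformly and closing the Gronwall loop without sacrificing the sharp thresholds $S_1, S_2$. Everything hinges on the observation that the nonlocal pressure is governed by the global $L^2_{\uloc}$ norm rather than by the local scaled energy, so that it enters only through the benign additive constant $\|v_0\|_{L^2_{\uloc}}$ and the time-restriction $T_{\uloc}$; verifying this carefully (Lemma~\ref{lem:main}) and tracking how the scaling in part (ii) degrades the $L^2_{\uloc}$ norm are where the real work lies.
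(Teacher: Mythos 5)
Your outline for part (i) is essentially the paper's own argument: the a priori bound on $E_r$ via the local energy inequality with the pressure decomposition \eqref{decomposition}, the far-field pressure controlled by $\|v_0\|_{L^2_{\uloc}}^2$ through Lemma \ref{apriori uloc}, the Gronwall step (Lemma \ref{gronwall}), recentering via Lemma \ref{lemmaA}, and Lemma \ref{CKN-Prop1} applied at radius $r\sim\sqrt{t_0}$; this is exactly Lemma \ref{lem:main} combined with the proof of Theorem \ref{large data}, including the origin of the prefactor $1+\|v_0\|_{L^2_{\uloc}}S_1^{\frac14(4-\al)}$ from the nonlocal-pressure term $r^{9/2}\|v_0\|_{L^2_{\uloc}}^3$ in \eqref{02013} (note only that this term lives in the CKN quantity, not in the energy bound itself, which is simply $E_r\le\delta$).

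The gap is in part (ii). Deducing (ii) from (i) by the rescaling $u(x,t)=\lambda v(\lambda x,\lambda^2 t)$ with $\lambda\sim N_R^{-1/(\al-1)}\le 1$ proves a strictly weaker statement than (ii), in three concrete respects. First, the spatial region: part (i) applied to $u$ gives information only for $|x_u|\le\frac12$, i.e.\ only for $|x|\le\lambda/2\sim \frac12 N_R^{-1/(\al-1)}$ in the original variables; the annulus $\lambda/2<|x|\le\frac12$ is not covered, and it cannot be recovered by translation because all hypotheses on $v_0$ are origin-centered. Second, the paraboloid: in the worst case the normalization gives $N(u_0)=1$ exactly, so undoing the scaling yields regularity only for $t\gtrsim \max(R^2,|x|^2)$, whereas (ii) asserts it for $t\gtrsim N_R^{-2}\max(R^2,|x|^2)$, a region reaching down to times smaller by the factor $N_R^{-2}$. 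Third, the constants: the power of $\lambda$ you inherit on the $\|v_0\|_{L^2_{\uloc}}$-term is $\lambda^{\frac{1-\al}{2}}\cdot\lambda^{-\frac12}\cdot\lambda^{-\frac{4-\al}{2}}=\lambda^{-2}=N_R^{2/(\al-1)}$, which exceeds the claimed $N_R^{3/2}$ whenever $\al<7/3$, and the inherited time window $\lambda^2 S_1(u_0)$ contains the term $\lambda^{8}\|v_0\|_{L^2_{\uloc}}^{-12}=N_R^{-8/(\al-1)}\|v_0\|_{L^2_{\uloc}}^{-12}$, which for large $N_R$ and $\|v_0\|_{L^2_{\uloc}}$ can be far smaller than every entry of $S_2$ in \eqref{T2.def}.

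The paper never rescales the solution in Theorem \ref{large data}. Instead, Lemma \ref{lem:main} and Lemma \ref{lemmaA} carry a free parameter $\delta\ge 70N_R$, and parts (i) and (ii) are the \emph{same} argument run with $\delta=70$ and $\delta=70N_R$ respectively. The point you miss is that taking $\delta\sim N_R$ large does not shrink the spatial aperture: in Lemma \ref{lemmaA}(i) the admissible radii about $x_0$ start at $R(x_0)=\max\big(\frac R2,\,2(N_R/\delta)^{1/\al}|x_0|\big)\approx\max\big(\frac R2,|x_0|\big)$, independent of $N_R$; what degrades is only the Gronwall time scale $\lambda=c/(1+\delta^2)\sim N_R^{-2}$ and the radii at which the CKN smallness threshold holds. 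Tracking this explicit $\delta$-dependence is precisely what produces the region $cN_R^{-2}\max(R^2,|x|^2)\le t\le S_2$ and the prefactor $N_R^{(\al+1)/2}+N_R^{3/2}\|v_0\|_{L^2_{\uloc}}S_2^{\frac14(4-\al)}$. (A rescaling of the kind you propose does appear in the paper, but only in the proof of Theorem \ref{thm:main}, where the conclusion is allowed to deteriorate with the size of the data; part (ii) of Theorem \ref{large data} asserts exactly the non-deteriorated dependence, so that route cannot reach it.)
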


The following local energy estimate is our key ingredient for the proof 
of Theorem \ref{large data}, which shows the smallness 
of the scaled local energy $E_r$ defined in \eqref{scaled energy}
is propagated at least locally in time
and some scales.

\begin{lemma}
\label{lem:main}
Let $v$ be a local energy solution of \eqref{NS} in $\R^3 \times (0,T)$ associated with 
initial data  $v_0 \in L^2_{\uloc}$, and
$N_R= \sup_{R< r \le 1} \frac{1}{r^\alpha}\int_{B_r(0)}|v_0|^2<\infty$
for  some constants $\alpha \in [1,4)$ and $R \in [0,1)$.
 Let $\delta \ge 70 N_R$. Then  for any $r \in (R,1]$ we  have
\begin{align}
E_r(t)
&\le \delta \qquad  \mbox{for} \ \ 0 < t \le \min 
\bke{ \lambda r^2, S_1},
\label{A_r}
\end{align}
where
$$\lambda=\frac{c}{1+\delta^2} \le 1 \quad \textit{and} 
 \quad S_1=  \min \left ( T, \,
 \frac{c\min\{1,\delta^4\} }{1+\|v_0\|_{L^2_{\uloc}}^{12}}
 \right).$$
Moreover, 
if $R<R_1:=\min \bke{\sqrt{\frac{S_1}{\lambda}},  1 }$,
there exists $c_2(t)\in \R$ such that
\begin{align}
 \frac{1}{r^2}
\int_0^{\lambda r^2}\!\!\int_{B_r}\abs{v}^3+|p-c_2(t)|^{\frac32}dxdt
&\le C(\lambda^{\frac14} \delta^{\frac32}r^{\frac32 (\alpha-1)}
+ \lambda \|v_0\|_{L^2_{\uloc}}^3r^{\frac{9}{2}}) \quad 
\mbox{for  all} \ r \in (R,R_1].
\label{02013}
\end{align}
Above $c$ and $C>0$ are absolute constants.
\end{lemma}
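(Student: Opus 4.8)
The plan is to derive a Gronwall-type inequality for $f(t):=E_r(t)$ from the local energy inequality \eqref{CKN-LEI} and then invoke Lemma \ref{gronwall}. First I would test \eqref{CKN-LEI} with a time-independent cutoff $\phi=\phi_r$ satisfying $\phi=1$ on $B_r$, $\supp\phi\subset B_{2r}$, $|\nabla\phi|\lesssim 1/r$, $|\Delta\phi|\lesssim 1/r^2$, using property (3) of Definition \ref{def:localenergy} to include the initial contribution. After dividing by $r^\alpha$, the left side dominates $E_r(t)$, while the right side splits into an initial term $\frac{1}{r^\alpha}\int_{\supp\phi}|v_0|^2\lesssim N_R$ (for $r\le\tfrac12$; the range $r\in(\tfrac12,1]$ is an easy variant), a diffusion term $\frac{1}{r^{\alpha+2}}\int_0^t\int_{B_{2r}}|v|^2\lesssim \frac{1}{r^2}\int_0^t E_{2r}(s)\,ds$, a convective term $\frac{1}{r^{\alpha+1}}\int_0^t\int_{B_{2r}}|v|^3$, and a pressure term $\frac{1}{r^{\alpha+1}}\int_0^t\int_{B_{2r}}|p-c|\,|v|$ (any constant $c$ is admissible since $\int v\cdot\nabla\phi=0$).

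For the convective term I would apply the Gagliardo--Nirenberg inequality $\|v\|_{L^3(B_{2r})}^3\lesssim \|v\|_{L^2}^{3/2}\|\nabla v\|_{L^2}^{3/2}+r^{-3/2}\|v\|_{L^2}^3$ pointwise in time, bound $\|v(s)\|_{L^2(B_{2r})}^2\le (2r)^\alpha E_{2r}(s)$, and then use Young's inequality to split $\|\nabla v\|_{L^2}^{3/2}$ into a small multiple of $\|\nabla v\|_{L^2}^2$, absorbed into the dissipation on the left, and a power $E_{2r}(s)^3$ feeding the Gronwall integral. The pressure term is where the real work lies: I would insert the decomposition \eqref{decomposition} with $x_0=0$, writing $p-c_{0,r}$ as the local singular integral over $B_{3r}$ plus the non-local integral over $\R^3\setminus B_{3r}$. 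The local part is bounded in $L^{3/2}(B_{2r})$ by Calder\'on--Zygmund theory by $\|v\|_{L^3(B_{3r})}^2$, so it is controlled exactly like the convective term. For the non-local part I would use the kernel bound $|K(x-y)-K(-y)|\lesssim r/|y|^4$ valid for $x\in B_{2r}$, $|y|\ge 3r$, and split the $y$-integral at $|y|=1$: over $3r\le|y|\le1$ a dyadic decomposition together with the (inductively available) bound $E_\rho\le\delta$ gives a pointwise contribution scaling like $\delta\,r^{\alpha-3}$, while over $|y|\ge1$ the uniform a priori bound $\|v(t)\|_{L^2_{\uloc}}^2\le 2\|v_0\|_{L^2_{\uloc}}^2$ from Lemma \ref{apriori uloc} yields a contribution $\lesssim r\|v_0\|_{L^2_{\uloc}}^2$.

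Assembling these bounds produces an inequality $f(t)\le a+b\int_0^t (f+f^m)\,ds$ with $a\lesssim N_R$, $b\sim r^{-2}$ (plus a data-dependent piece), and $m$ fixed by the Young exponents. Lemma \ref{gronwall} then yields $f(t)\le 2a\le\delta$: the threshold $C/(b(1+a^{m-1}))$ is $\gtrsim r^2/(1+\delta^2)=\lambda r^2$ since $b\sim r^{-2}$ and $a\le\delta$, while $S_1$ (with its power of $\|v_0\|_{L^2_{\uloc}}$) is forced both by the data-dependent far-field terms and by the range of validity $T_{\uloc}$ of Lemma \ref{apriori uloc}; the choice $\delta\ge 70N_R$ leaves room for all absolute constants so that $2a\le\delta$. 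To keep the argument non-circular in $r$, I would run it for the supremum of $E_\rho(t)$ over the band $\rho\ge\sqrt{t/\lambda}$, so that the terms at scale $\rho$ involving $E_{2\rho}$ are genuinely reabsorbed. Finally, \eqref{02013} follows by substituting the now-established bound $E_r\le\delta$ back into the Gagliardo--Nirenberg estimate for $\frac{1}{r^2}\int_0^{\lambda r^2}\int_{B_r}|v|^3$ (giving the $\lambda^{1/4}\delta^{3/2}r^{3(\alpha-1)/2}$ term) and into the same pressure decomposition with $c_2(t):=c_{0,r}(t)$ (the local and intermediate parts again giving $\lambda^{1/4}\delta^{3/2}r^{3(\alpha-1)/2}$, the far part giving $\lambda\|v_0\|_{L^2_{\uloc}}^3 r^{9/2}$); here the crucial gain is that the extra factor $t\le\lambda r^2$ from the time integration supplies the favorable powers of $r$.

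The main obstacle, and the conceptual heart of the lemma, is the non-local pressure estimate: one must show that the merely $L^2_{\uloc}$ information on the data controls the far-field pressure uniformly down to arbitrarily small scales $r$, and that after integration in $t\in(0,\lambda r^2)$ this contribution carries exactly the powers of $r$ and $\lambda$ needed for reabsorption. The delicate bookkeeping of the exponents of $r$, $\delta$, $\lambda$ and $\|v_0\|_{L^2_{\uloc}}$ --- so that the diffusion fixes the local time scale $\lambda r^2$ while the data-dependent terms fix $S_1$ --- is the step demanding the most care; by contrast the Calder\'on--Zygmund and Gagliardo--Nirenberg estimates for the local terms are standard.
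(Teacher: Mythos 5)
Your proposal is correct and follows essentially the same route as the paper's proof: the same cutoff local energy inequality, the same use of the decomposition \eqref{decomposition} splitting the pressure into a Calder\'on--Zygmund part and a far-field part estimated dyadically (where $\alpha<4$ is needed) plus the $L^2_{\uloc}$ a priori bound of Lemma \ref{apriori uloc}, and the same device of supremizing $E_\rho$ over scales $\rho\ge r$ so that the nonlinear and nonlocal terms are reabsorbed and Lemma \ref{gronwall} applies with $b\sim r^{-2}$, yielding the time scale $\lambda r^2$; the bound \eqref{02013} then follows by back-substitution exactly as in the paper. The only detail to add is that $c_2(t)$ in \eqref{02013} must be chosen independently of $r$, which holds because the decompositions \eqref{decomposition} at different radii differ only by functions of $t$ (the paper arranges this by taking $c_r=c_1$ from the decomposition on $B_2$).
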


\begin{proof}
[Proof of Lemma \ref{lem:main}]
By the definition of the local energy solution,  for $r \in (R,1]$, there exists
$c_r=c_r(t)$
such that the pressure admits the following decomposition \eqref{decomposition}:
\begin{align}
p-c_r
&=-\frac{|v|^2}{3}+ p_{\loc}+p_{nonloc}
\notag
\\
&:=-\frac{|v|^2}{3}+ \mbox{p.v.}\int_{B_{3r}} K(x-y) (v\otimes v)(y)dy \label{pressure'}
\\
&\qquad+\int_{\R^3 \backslash B_{3r}} (K(x-y)-K(-y) )(v\otimes v)(y)dy
\notag
\end{align}
in $L^{3/2}(B_{2r}(0) \times (0,T))$. By considering \eqref{decomposition} in a larger region $B_2 \times (0,T)$,
we may remove the $r$-dependence of $c_r(t)$ by choosing $c_r(t)=c_{1}(t)$.
Since $(v, p-c_r)$ is a suitable weak solution to \eqref{NS} in 
$B_{2r}$, 
the local energy inequality \eqref{CKN-LEI} with the test function $\phi$
satisfying 
$\phi \in C^{\infty}_0(B_{2r})$ such that $0\le \phi \le 1$
in $B_{2r}$ with $\phi=1$ in $B_r$
and $\|\nabla^k \phi \|_{L^\infty} \le C_kr^{-k}$ 
readily yields
\begin{align}
E_r(t)
&\le
\frac{2^{\alpha}}{(2r)^\alpha}\int_{B_{2r}} |v_0|^2 
+
\frac{C}{r^{\alpha+2}}\int^t_0\! \int_{B_{2r}}|v|^2
+
\frac{C}{r^{\alpha+1}}\int^t_0\!\int_{B_{2r}}\!\!|v|^3+|v||p-c_r|.
\end{align}
By \eqref{pressure'} together with   
H\"older and Young inequalities, the last term involving the pressure can be estimated as follows:
\begin{align*}
\frac{1}{r^{\alpha+1}}\int^t_0\!\int_{B_{2r}} |v||p-c_r|
&\le 
(\frac{1}{r^{\frac{3\alpha}{2}+\frac12}}\int^t_0\!\int_{B_{2r}} |v|^3)^{\frac13}
(\frac{1}{r^{\frac{3\alpha}{4}+\frac54}}\int^t_0\!\int_{B_{2r}} |p-c_r|^{\frac32})^{\frac23}
\\
&\le 
\frac{1}{r^{\frac{3\alpha}{2}+\frac12}}\int^t_0\!\int_{B_{2r}} |v|^3
+
\frac{C}{r^{\frac{3\alpha}{4}+\frac54}}\int^t_0\!\int_{B_{2r}} |p-c_r|^{\frac32}
\\
&\le 
\frac{1}{r^{\frac{3\alpha}{2}+\frac12}}\int^t_0\!\int_{B_{2r}} |v|^3
+
\frac{C}{r^{\frac{3\alpha}{4}+\frac54}}\int^t_0\!\int_{B_{2r}} 
|v|^3+|p_{loc}|^{\frac32}+|p_{nonloc}|^{\frac32}.
\end{align*}
Noting that $\frac{3\alpha}{4}+\frac54\le \alpha +1 \le \frac{3\alpha}{2}+\frac12$ for $\alpha \ge 1$,  we have
\begin{align}
E_r(t)
&\le
2^{\alpha}N_{R}
+
\frac{C}{r^{\alpha+2}}\int^t_0\! \int_{B_{2r}}|v|^2
+
\frac{C}{r^{\frac{3\alpha}{2}+\frac12}}
\int^t_0\!\int_{B_{2r}}\!\!|v|^3+|p_{loc}|^{\frac32}
+
\frac{C}{r^{\frac{3\alpha}{4}+\frac54}}\int^t_0\!\int_{B_{2r}} 
|p_{nonloc}|^{\frac32}.
\notag
\\
&=:2^{\alpha} N_{R}+I_{lin} +I_{nonlin}+ I_{ploc}+I_{pnonloc}.
\label{A_r'}
\end{align}
We divide the estimate into two cases.

\medskip
\noindent
{\underline{\bf Case I:~$R< r\le \frac16$}.}
This case is empty if $R>\frac16$.
For the simplicity of notation, 
let $\mathcal{E}_r(t):= \sup_{\rho \in (r, 1]} E_{\rho}(t)$.
We easily see
\begin{align}
I_{lin}
&\le
\frac{C}{R^2}\int^t_0
\mathcal{E}_R(s)ds.
\label{vlin}
\end{align}
By the interpolation and Young's inequalities, we also have
\begin{align}
I_{nonlin}
&\le
\frac{C}{r^{\frac{3\alpha}{2} +\frac12}} \int^t_0
\bke{\int_{B_{r}} \abs{ \nabla v}^2}^{\frac{3}{4}}\bke{\int_{B_{r}} \abs{v}^2}^{\frac{3}{4}}
 +
 \bke{\frac{1}{r}\int_{B_{r}} \abs{v}^2}^{\frac{3}{2}}ds  \notag
\\
&=
\frac{C}{r^{2}} \int^t_0
\bke{\frac{1}{r^{\alpha-2}}\int_{B_{r}} \abs{ \nabla v}^2}^{\frac{3}{4}}
\bke{\frac{1}{r^{\alpha}}\int_{B_{r}} \abs{v}^2}^{\frac{3}{4}}
 +
 \bke{\frac{1}{r^\alpha}\int_{B_{r}} \abs{v}^2}^{\frac{3}{2}}ds   \notag
\\
&\le \epsilon \mathcal{E}_R(t)
+
\frac{C_{\epsilon}}{R^2} \int^t_0 \mathcal{E}_{R}(s)^3
+ \mathcal{E}_{R}(s)^\frac32 ds.
\label{nonlin}
\end{align}
This estimate and the Calder\'on-Zygmund estimate 
give
\begin{align}
I_{ploc}
&\le
\frac{C}{r^{\frac{3\alpha}{2}+\frac12}} 
\int^t_0 \int_{B_{3r}}\!\!|v|^3
\notag
\\
&\le
\epsilon \mathcal{E}_R(t)
+
\frac{C_{\epsilon}}{R^2} \int^t_0 \mathcal{E}_{R}(s)^3
+ \mathcal{E}_{R}(s)^\frac32 ds.
\label{ploc'}
\end{align}
On the other hand, since $|x-y| \ge |y|/3$
for $x \in B_{2r}$ and $y \in \R^3 \backslash B_{3r}$, we see%
\begin{align}
|p_{nonloc}(x)|
&\le
\int_{\R^3 \backslash B_{3r}}
|K(x-y)-K(-y)||v(y)|^2dy
\notag
\\
&\le
Cr\int_{\R^3 \backslash B_{3r}}
\frac{1}{|x-y|^4} |v(y)|^2dy
\notag
\\
&\le
Cr\int_{\R^3 \backslash B_{3r}}
\frac{|v(y)|^2}{|y|^4} dy
\notag
\\
&\le
Cr\sum_{k=1}^{\lfloor -\log_2 r-1 \rfloor} \int_{B_{2^{k}r} \backslash B_{2^{k-1}r}}
\frac{|v(y)|^2}{|y|^4} dy
+
Cr\int_{\R^3 \backslash B_{1/4}}
\frac{|v(y)|^2}{|y|^4}dy
\notag
\\
&\le
Cr^{\alpha-3}
\sum_{k=1}^{\lfloor -\log_2 r-1 \rfloor}
\frac{1}{2^{(4-\alpha)k}}
\Big(
\frac{1}{(2^{k}r)^\alpha} \int_{B_{2^k r}} |v(y)|^2 dy
\Big)
+
CrE_{\uloc}
\notag
\\
&\le
Cr^{\alpha-3}
\mathcal{E}_R
+
Cr\|v_0\|_{L^2_{\uloc}}^2,
\label{pnonloc}
\end{align}
provided $t\le T_{\uloc}  =\min \big(T, \frac{c_0}{1+\|v_0\|_{L^2_{\uloc}}^4}\big)$ and $\alpha <4$, 
where we have used Lemma \ref{apriori uloc} in the last line.
We then obtain
\begin{align}
I_{pnonloc}
&\le \frac{C}{r^{\frac{3\alpha}4 +\frac54}}
\int^t_0 r^3 \bke{ r^{\frac32(\alpha-3)}\mathcal{E}_R^{\frac32}(s)
+ r^{\frac32}\|v_0\|_{L^2_{\uloc}}^3 } ds  \notag
\\
&\le
\frac{C}{r^{2}}\int^t_0 \mathcal{E}_R(s)^{\frac32}ds +
Ctr^{\frac{13}{4}-\frac{3\alpha}{4}} \|v_0\|^3_{L^2_{\uloc}}
\notag
\\
&\le
\frac{C}{r^2}\int^t_0 \mathcal{E}_R(s)^{\frac32} ds+
\frac{\delta}{10},
\label{pnonloc'}
\end{align}
provided $t \le \bke{ \frac{c\delta}{\|v_0\|_{L^2_{\uloc}}^3},T_{\uloc} }$ with a small absolute constant $c>0$. We have used 
$\al \in [1,4)$ in the second inequality.
Hence plugging \eqref{vlin}, \eqref{nonlin}, \eqref{ploc'}, and \eqref{pnonloc'}
into \eqref{A_r'} with $\e=1/6$,  we obtain
\begin{align*}
\sup_{R< r <1/6}E_r(t)
&\le
\frac{\delta}{3} + \frac13\mathcal{E}_R(t)
+\frac{C}{R^2} \int^t_0 \mathcal{E}_R(s)
+
\mathcal{E}_R(s)^3 ds
\end{align*}
for $\delta \ge 70N_R$.

\medskip
\noindent
{\underline{\bf Case II:~$\frac16\le r\le 1$}.}
In order to bound the right hand side of \eqref{A_r'},
we observe from Lemma \ref{apriori uloc} that
\begin{equation}
\label{est:uloc}
\sup_{1/3\le r\le 2} E_r(t) \le C E_{\uloc}(t) \le C\|v_0\|_{L^2_{\uloc}}^2
\qquad \mbox{holds} \
\mbox{for} \ t\le T_{\uloc}.
\end{equation}
This shows 
\begin{align*}
I_{lin}
&\le
C \int^t_0\sup_{1/3\le r\le2}
E_{r}(s) ds
\le
Ct \|v_0\|_{L^2_{\uloc}}^2,
\end{align*}
and hence if $t\le \bke{T_{\uloc}, \frac{c\delta}{\|v_0\|_{L^2_{\uloc}}^2}}$
with a suitable small constant $c>0$, we
have
\begin{align}
\label{vlin2}
I_{lin}&\le
\frac{\delta}{40}.
\end{align}
For the nonlinear term in \eqref{A_r'}, we have
\begin{align}
I_{nonlin}
& \le
C \int_0^t\int_{B_{2}} \abs{ v}^3
\notag
\\
& \le
C \bke{\int_0^t\int_{B_{2}} \abs{ \nabla v}^2}^{\frac34}
\bke{\int^t_0 \bke{\int_{B_{2}} \abs{v}^2}^3}^{\frac{1}{4}}
 +C\int^t_0\bke{\int_{B_{2}} \abs{v}^2}^{\frac{3}{2}}
\notag
\\
&\le
C(t^{\frac14} +t) \|v_0\|_{L^2_{\uloc}}^3,\label{est:0310}.
\end{align}
By the Calder\'on-Zygmund estimate it also implies
\begin{align}
\label{est:03102}
I_{ploc}&\le
C\int^t_0\!\! \int_{B_{3}}\!\!|v|^3
\le
C(t^{\frac14} +t) \|v_0\|_{L^2_{\uloc}}^3.
\end{align}
Thus the right hand sides in \eqref{est:0310} and \eqref{est:03102} are
 bounded by $\frac{\delta}{40}$, provided
$$
t\le \min \bke{ T_{\uloc}, \frac{c\delta^4}{\|v_0\|_{L^2_{\uloc}}^{12}},
\frac{c\delta}{\|v_0\|_{L^2_{\uloc}}^{3}} }
$$
with some small absolute constant $c>0$.
On the other hand, in the same way as in \eqref{pnonloc},
we have
\begin{align*}
|p_{nonloc}(x)|
\le
C\int_{\R^3 \backslash B_{1/2}}
\frac{|v(y)|^2}{|y|^4} dy
&\le
C\|v_0\|_{L^2_{\uloc}}^2,
\end{align*}
which implies
\begin{align*}
I_{pnonloc}
&\le
\frac{\delta}{40} \qquad \mbox{for} \  \ t \le \frac{c\delta}{\|v_0\|_{L^2_{\uloc}}^3}.
\end{align*}
Making use of these estimates in \eqref{A_r'}, we obtain
$$
\sup_{1/6\le r \le 1}E_r(t)
\le
\frac{\delta}{3}. 
$$

By choosing $c_1>0$ sufficiently small, we may take
$$
S_1:= \min \bke{T, \, \frac{c_1\min\{1, \delta^4 \}}{1+\|v_0\|_{L^2_{\uloc}}^{12}} }
\le \min \bke{ T_{\uloc}, \frac{c\delta}{\|v_0\|_{L^2_{\uloc}}^2},\frac{c\delta^4}{\|v_0\|_{L^2_{\uloc}}^{12}},
\frac{c\delta}{\|v_0\|_{L^2_{\uloc}}^{3}} }.$$
Summarizing the conclusions of the cases I and II, we have
\begin{align*}
\mathcal{E}_R(t)
&\le
\frac{\delta}{2}
+\frac{C}{R^2} \int^t_0 \mathcal{E}_R(s)
+
\mathcal{E}_R(s)^3 ds
\end{align*}
 for $t\le S_1$.
Applying Lemma \ref{gronwall}
we obtain
\EQ{
\label{A_R}
\mathcal{E}_R(t)
\le \delta \quad  \mbox{for} \ t \le \min \bke{ \lambda R^2, S_1 }, 
\quad \la= \frac c{1+\de^2}.
}
Since $\delta \ge 70N_R \ge 70N_r$
for any $r\in (R,1]$, we may replace $R$ by $r$ in \eqref{A_R}, and thus we have
verified \eqref{A_r} for $R < r \le 1$.%

\medskip

To prove \eqref{02013}, 
we see that if $\lambda r^2 \in (0,S_1]$,
\begin{align*}
&\frac1{r^{\frac{3\alpha}{2} +\frac12}}\int^{\lambda r^2}_0 \int_{B_{r}}|v|^3\\
&\le
\frac{C}{r^{\frac{3\alpha}{2} +\frac12}} \int_0^{\lambda r^2}\bke{\int_{B_{r}} \abs{ \nabla v}^2}^{\frac{3}{4}}\bke{\int_{B_{r}} \abs{v}^2}^{\frac{3}{4}}ds
 +
 \frac{C}{r^{\frac{3\alpha}{2} +\frac12}}
 \int^{\lambda r^2}_0\bke{\frac{1}{r}\int_{B_{r}} \abs{v}^2}^{\frac{3}{2}}ds
\\
&\le
\frac{C}{r^{\frac{3\alpha}{2} +\frac12}} \int_0^{\lambda r^2}\bke{\int_{B_{r}} \abs{ \nabla v}^2}^{\frac{3}{4}}ds
 \  r^{\frac{3\alpha}4}E_r(\lambda r^2)^{\frac34}
 +
 \frac{C}{r^{\frac{3\alpha}{2} +\frac12}}
 \int^{\lambda r^2}_0
r^{\frac32(\alpha-1)} \bke{\frac{1}{r^\alpha}\int_{B_{r}} \abs{v}^2}^{\frac{3}{2}}ds
\\
&\le
\frac{C}{r^{\frac{3\alpha}{2} +\frac12}} 
\bke{\int_0^{\lambda r^2}\int_{B_{r}} \abs{ \nabla v}^2ds}^{\frac34}
(\lambda r^2)^\frac14
 \  r^{\frac{3\alpha}4}E_r(\lambda r^2)^{\frac34}
 + C\lambda 
  E_r(\lambda r^2)^{\frac{3}{2}}
\\
&\le C(\lambda^\frac14 +\lambda)E_r(\lambda r^2)^{\frac32}.
\end{align*}
Hence taking $R_1:=\min\bke{\sqrt{\frac{S_1}{\lambda}},1}$,%
we have 
\begin{align}
\frac1{r^{\frac{3\alpha}{2}+\frac12}}\int^{\lambda r^2}_0 \int_{B_{r}}|v|^3
&\le
C \lambda^{\frac14} \delta^{\frac32} \qquad \mbox{for} \ r \in (R, R_1].
\label{1228}
\end{align}
By the Calder\'on-Zygmund estimate we also have 
\begin{align}
\frac1{r^{\frac{3\alpha}{2}+\frac12}}\int^{\lambda r^2}_0 \int_{B_{r}}|p_{loc}|^{\frac32}
&\le
C \lambda^{\frac14} \delta^{\frac32} \qquad \mbox{for} \ r \in (R, R_1]. 
\label{1229}
\end{align}
From \eqref{pnonloc} we have 
\begin{align}
\label{0201}
\frac{C}{r^{\frac32 \alpha+\frac12}}
\int^{\lambda r^2}_0 \int_{B_{2r}} |p_{nonloc}|^{\frac32}
&\le
\frac{C}{r^{\frac32 \alpha + \frac12}}
\int^{\lambda r^2}_0 
r^3 \bke{ r^{\frac32 (\alpha -3)}
\mathcal{E}_R(s)^{\frac32} +
r^{\frac32} \|v_0\|^3_{L^2_{\uloc}} }ds
\\
\notag
&\le
C\lambda( \delta^{\frac32}
+
r^{6-\frac32 \alpha}\|v_0\|_{L^2_{\uloc}}^3 ).
\end{align}
Combining this with \eqref{1228} and \eqref{1229} we obtain \eqref{02013}.
\end{proof}
\begin{proof}
[Proof of Theorem \ref{large data}]
By Lemma \ref{lemmaA} using $R\le 1/2$,
for  each $\delta \ge 2^{\alpha} N_R$ and $x_0 \in B_{1/2}$
\begin{equation*}
\sup_{R(x_0)< r\le \rho}
\frac{1}{r^\alpha}\int_{B_r(x_0)}|v_0|^2dx \le \delta, 
\quad {\rho = 1-|x_0|}, 
\quad 
R(x_0)=\max \bke{\frac{R}{2}, 2\bke{\frac{N_R}{\delta}}^{\frac1{\alpha}}|x_0| }.
\end{equation*}
Let
$v_{x_0}(x,t)=\rho v(x_0+\rho x, \rho^2 t)$, $p_{x_0}(x,t)=\rho^2 p(x_0+\rho x, \rho^2 t)$.  Since $(v_{x_0},p_{x_0})$ solves \eqref{NS} in $\R^3 \times (0,\rho^{-2}T)$, corresponding to $(v,p)$ in $\R^3 \times (0,T)$,
 and $1/2 \le \rho \le 1$, we have
\[
\|v_{x_0}(t=0)\|_{L^2_{\uloc}} \le C\|v_0\|_{L^2_{\uloc}},
\]
\[
N_{\rho^{-1} R(x_0)} (v_{x_0})=
\sup_{\rho^{-1} R(x_0)< r\le 1}
\frac{1}{r^{\alpha}}\int_{B_r(0)}|v_{x_0}(t=0)|^2 
= 
\sup_{R(x_0)< r\le \rho}
\frac{\rho^{\alpha-1}}{r^{\alpha}}\int_{B_r(x_0)}|v_0|^2 
\le \delta .
\]
This ensures the assumption of Lemma \ref{lem:main} for $v_{x_0}$. If we take $\de_{x_0} = 70 \de$, then
from \eqref{02013}, there exists $c'_{(x_0)}(t)$ such that
\[
\sup_{\rho^{-1} R(x_0)< r \le R_1' } \frac{1}{r^2}
\int_0^{\lambda r^2}\!\!\!\int_{B_r(0)}|v_{x_0}|^3+|p_{x_0}-c'_{(x_0)}(t)|^{\frac32}
\le 
C (\lambda^{\frac14} \delta^{\frac32}r^{\frac32 (\alpha-1)}
+ \lambda \|v_0\|_{L^2_{\uloc}}^3r^{\frac{9}{2}}).
\]
Here  
$$R_1'=\min \big(\sqrt{\frac{S_1'}{\lambda}},1\big)  \qquad
\mbox{with} \quad  
S_1'=\min \bke{\rho^{-2}T, \, \frac{c_1\min\{1, \delta^4 \}}{1+\|v_0\|_{L^2_{\uloc}}^{12}}}.
$$
This implies
\begin{align*}
\sup_{R(x_0) \le r \le \rho R_1' } 
 \frac{1}{\lambda r^2}
\int_0^{\lambda r^2}\!\!\!\int_{B_{\sqrt\lambda r}(x_0)}\abs{v}^3+|p-c_{(x_0)}(t)|^{\frac32}
\le C
(\lambda^{-\frac34} \delta^{\frac32}r^{\frac32 (\alpha-1)}
+ \|v_0\|_{L^2_{\uloc}}^3r^{\frac{9}{2}}).
\end{align*}
Denote
$$
\ep(x_0,r):=\frac{1}{r^2}
\int_0^{r^2}\!\!\!\int_{B_r(x_0)}
\abs{v}^3+|p-c_{(x_0)}(t)|^{\frac32}.
$$
For $\sqrt{\lambda}R(x_0) \le r \le \sqrt{\lambda} \rho R_1'$ we have
\EQS{\label{0220a}
\ep(x_0,r)&\le C
[\lambda^{-\frac34} \delta^{\frac32}(\la^{-1/2}r)^{\frac32 (\alpha-1)}
+ \|v_0\|_{L^2_{\uloc}}^3 (\la^{-1/2}r)^{\frac{9}{2}}]
\\
&\le  C(1+\delta^{\frac{3\alpha}2})\delta^{\frac32}r^{\frac32(\alpha -1)}
+C(1+\delta)^{\frac92}r^{\frac92}\|v_0\|_{L^2_{\uloc}}^3.
}

We now choose $r>0$ as
$$
r \le R_2:=\min \bke{\bke{\frac{c}{(1+\delta^{\alpha})\delta} }^{\frac{1}{\alpha -1}},\frac{c}{\|v_0\|_{L^2_{\uloc}}^{2/3}(1+\delta)}}
$$
with a small constant $c>0$ so that $\ep(x_0,r)$ is smaller than $\e_{\CKN}$ in Lemma \ref{CKN-Prop1}.
This enables us to apply Lemma \ref{CKN-Prop1} for
$x_0 \in B_{1/2}$ and  
$$
t=r^2 
\in 
\left( \la R(x_0)^2
, \,
\min \bke{\lambda(\rho R_1')^2, R_2^2}
\right]
$$ 
to see that $v$ is regular at $(x_0,t)$ and, by \eqref{0220a},
\EQS{\label{0220b}
|v(x_0,t)|
&\le C_{\CKN}\ep(x_0, t^{\frac12})^{\frac13}t^{-\frac12}
\\
& \le C(1+\delta^{\frac{\alpha}2})\delta^{\frac12}
 t^{\frac14 (\alpha-3)}
 +C(1+\delta)^{\frac32}\|v_0\|_{L^2_{\uloc}} t^{\frac14}
 \\
& = C\bkt{ (1+\delta^{\frac{\alpha}2})\delta^{\frac12} + (1+\delta)^{\frac32}\|v_0\|_{L^2_{\uloc}} t^{\frac14(4-\al)}}
 t^{\frac14 (\alpha-3)}.
}

Since 
$1/2 \le \rho \le 1$, we may
take $c>0$ so small  that   
\begin{align*}
\lambda (\rho R_1')^2=\rho^2\min( S_1', \la)
&= \rho^2  
\min \bke{ \rho^{-2}T, \,  
\frac{c\min\{1,\delta^{4}\} }{1+\|v_0 \|_{L^2_{\uloc}}^{12} }, 
\, 
\la
}
\\
&\ge  
\min \bke{ T, 
\,
\frac{c\min\{1,\delta^4\} }{1+\|v_0 \|_{L^2_{\uloc}}^{12} }, 
\,
\frac{c}{(1+\delta)^2}
}.
\end{align*}
Thus \eqref{0220b} is valid for $x_0 \in B_{1/2}$ and $S_a \le t \le 
S_b$, where
\EQS{
\label{0220c}
S_a
&: = \frac{c}{1+\de^2}
 \max \bke{R^2, C\bke{{N_R}/{\delta}}^{\frac2{\alpha}}|x_0|^2 },
\\
S_b&:= \min \bke{T, 
\frac{c\min\{1,\delta^4\} }{1+\|v_0 \|_{L^2_{\uloc}}^{12} },
  \bke{\frac{c}{(1+\delta^{\alpha})\delta} }^{\frac{2}{\alpha -1}},
  \frac{c}{(1+\|v_0\|_{L^2_{\uloc}}^{4/3})(1+\delta)^2}}.
}

If $N_R \ge 1$, we can take $\de=70N_R$. Then
\[
S_a \le \frac{c}{N_R^2}
 \max \bke{R^2, |x_0|^2 },
\]
\[
S_b \ge  S_2= \min \bke{T, 
\frac{c }{1+\|v_0 \|_{L^2_{\uloc}}^{12} },
  \bke{\frac{c}{N_R} }^{\frac{2\al+2}{\alpha -1}},
  \frac{c}{(1+\|v_0\|_{L^2_{\uloc}}^{4/3})N_R^2}},
  \]
and
\EQS{
 |v(x_0,t)|\le C \bkt{ N_R^{(\al+1)/2} + N_R^{3/2} \|v_0 \|_{L^2_{\uloc}}  S_2^{\frac14(4-\al)} }
  t^{\frac14 (\alpha-3)}
\\
\text{ for }\quad
\frac{c}{N_R^2}
 \max \bke{R^2, |x_0|^2 } \le t  \le S_2.
}

If $N_R \le 1$, we can take $\de=70$. Then
\[
S_a \le c
 \max \bke{R^2, N_R ^{2/\al}|x_0|^2 },
\]
\[
S_b \ge  S_1= \min \bke{T, 
\frac{c  }{1+\|v_0 \|_{L^2_{\uloc}}^{12} }
},
\]
(This $S_1$ may have a smaller $c$ than $S_1$ in Part (i)), 
and
\EQS{
 |v(x_0,t)|\le C \bkt{ 1 +  \|v_0 \|_{L^2_{\uloc}}  S_1^{\frac14(4-\al)} }
  t^{\frac14 (\alpha-3)}
\\
\text{ for }\quad
c\max \bke{R^2, N_R ^{2/\al}|x_0|^2 } \le t  \le S_1. 
}
\end{proof}

\begin{proof}
[Proof of Theorem \ref{thm:main}]
\mbox{}\nopagebreak

\medskip
\noindent
For the proof of \eqref{0204}, we first claim 
that there exist $\epsilon_*$ and $c>0$ such that
if $N_{0}:=N_{(\alpha),0}:= \sup_{r\in (0,1]} 
\frac{1}{r^{\alpha}}\int_{B_r} \abs{v_0(x)}^2dx < \epsilon_*$, 
then
for any $x_0 \in B_{\frac12}$ and 
$r \in (cN_0|x_0|^{\alpha}, {c(1-|x_0|)R_1'}]$,
\begin{equation}
\label{ckn'}
\frac{1}{r^2}\int_0^{r^2}
\!\!\!\int_{B_r(x_0)}\abs{v}^3 +\abs{p-c_{(x_0)}(t)}^\frac32 dxdt
\le
\epsilon_{\CKN}
\end{equation}
holds for some $c_{(x_0)}(t) \in \R$ and $R_1'>0$ defined below. 
Here $\epsilon_{\CKN}$ is the small constant 
in Lemma \ref{CKN-Prop1}.
We observe from Lemma \ref{lemmaA} (ii)
that for  each $\eta \ge 2 N_0$ and $x_0 \in B_{\frac12}$
\begin{equation*}
\sup_{R(x_0)< r\le \rho}
\frac{1}{r}\int_{B_r(x_0)}|v_0|^2dx \le \eta, 
\quad {\rho = 1-|x_0|}, 
\quad 
R(x_0)=  \frac{2^\alpha 
N_0}{\eta}|x_0|^{\alpha} .
\end{equation*}
Let
$v_{x_0}(x,t)=\rho v(x_0+\rho x, \rho^2 t)$, $p_{x_0}(x,t)=\rho^2 p(x_0+\rho x, \rho^2 t)$ and $\delta=70\eta$.  
As in the proof of Theorem \ref{large data} we have
\[
\|v_{x_0}(t=0)\|_{L^2_{\uloc}} \le C\|v_0\|_{L^2_{\uloc}},
\]
\[
\sup_{\rho^{-1} R(x_0)< r\le 1}
\frac{1}{r}\int_{B_r(0)}|v_{x_0}(t=0)|^2 
= 
\sup_{R(x_0)< r\le \rho}
\frac{1}{r}\int_{B_r(x_0)}|v_0|^2 
\le
\eta 
=
\frac \de {70}.
\]
This guarantees the assumption of Lemma \ref{lem:main} for $v_{x_0}$ and $\alpha=1$, and hence there exists $c'_{(x_0)}(t)$ such that 
\[
\sup_{\rho^{-1} R(x_0)< r \le R_1' } \frac{1}{r^2}
\int_0^{\lambda r^2}\!\!\!\int_{B_r(0)}|v_{x_0}|^3+|p_{x_0}-c'_{(x_0)}(t)|^{\frac32}
\le C\lambda^{\frac14}\delta^{\frac32}.
\]
Here
$$R_1'=\min \bke{\sqrt{\frac{S_1'}{\lambda}},1}  \qquad
\mbox{with} \quad  
S_1'=\min \bke{\rho^{-2}T, \, \frac{c_1\min\{1, \delta^4 \}}{1+\|v_0\|_{L^2_{\uloc}}^{12}}}.
$$
This implies
\begin{align}
\sup_{R(x_0) \le r \le \rho R_1'} 
\frac{1}{\lambda r^2}
\int_0^{\lambda r^2}\!\!\!\int_{B_{\sqrt\lambda r}(x_0)}\abs{v}^3+|p-c_{(x_0)}(t)|^{\frac32}
\le \frac{C\delta^{\frac32}}{\lambda^{\frac{3}4}}
\le C(1+\delta^{\frac32})\delta^{\frac32}.
\end{align}
Take a constant $\delta_0>0$ so small that $C(1+\delta^{\frac{3}2}_0)\delta_0^{\frac32}\le \epsilon_{\CKN}$ and
 assume that $v_0$ satisfies $N_0 \le \delta_0/70$. 
Then we may choose $\delta=\delta_0$ since 
$\delta_0 \ge 70N_0$.
With this choice and with 
$\lambda_0=\lambda(\delta_0)$, we have verified
\eqref{ckn'}  for
$\lambda_0^{\frac12} R(x_0) < r \le \lambda_0^{\frac12}\rho R_1'$.

This enables us to apply Lemma \ref{CKN-Prop1} for
$x_0 \in B_{1/2}$ and  $t_0=r^2 \in \left(C \lambda_0
 \bke{\frac{N_0}{\delta_0}}^{2}|x_0|^{2\alpha} , \lambda_0
(\rho R_1')^2\right]$ 
to see
$$
|v(x_0,t_0)|\le C_{\CKN}r^{-1} 
= C_{\CKN}t_0^{-\frac12},
$$
and hence $v$ is regular at $(x_0,t_0)$. Since %
$\la_0$ and $\delta_0$ are absolute constants
and $1/2 \le \rho \le 1$, we may
choose  $c>0$ so small  that   
\begin{align*}
\lambda_0 (\rho R_1')^2 =\rho^2\min( S_1', \la_0)
&= \rho^2  
\min \bke{ \rho^{-2}T, \,  
\frac{c\min\{1,\delta_0^{4}\} }{1+\|v_0 \|_{L^2_{\uloc}}^{12} }, \, \la_0 }
\notag
\\
&\ge  
\min \bke{ T, \,  \frac{c}
{1+\|v_0\|_{L^2_{\uloc}}^{12}
}}.
\end{align*}
Thus 
$|v(x_0,t_0)|\le C_{\CKN} t_0^{-\frac12}$ for $x_0 \in B_{1/2}$ and 
$
 cN_0^{2}|x_0|^{2\al}  \le t _0\le \min\bke{T,  \,  \frac{c}{1+\|v_0\|_{L^2_{\uloc}}
^{12 }}}.
$
This proves \eqref{0204} for the case $N_0 \le \ep_*$.

In order to consider the case $N_0 > \ep_*$, let
$u(x,t)=\rho v(\rho x, \rho^2 t)$, $q(x,t)=\rho^2 p(\rho x, \rho^2 t)$,
$u_0(x)=\rho v_0(\rho x)$
with $0<\rho < 1$ to be given below.
We easily see 
\[
\|u_0\|_{L^2_{\uloc}}
=\rho^{-\frac12} \sup_{x_0\in \R^3}\, \|v_0\|_{L^2(B_\rho(x_0))} 
 \le C\rho^{-\frac12}\|v_0\|_{L^2_{\uloc}},
\]
\begin{equation}
\label{0304}
N_{0}(u_0)
=
\sup_{r \in (0,1]}
\frac{1}{r^{\al}}\int_{B_r}|u_0|^2 
=
\sup_{r \in (0,1]}
\frac{1}{\rho r^{\al}}\int_{B_{\rho r}}|v_0|^2 
=
\sup_{r \in (0,\rho]}
\frac{\rho^{\al -1}}{r^{\al}}\int_{B_r}|v_0|^2 
\le
\rho^{\al -1} N_0.
\end{equation}
Let $\rho =\bke{\frac{\epsilon_*}{N_0}}^{\frac{1}{\al -1}}$ 
so that $N_{0}(u_0) \le \epsilon_*$.
Then it follows from Step 1 that $u$ is regular and satisfies 
$|u(x,t)|\le C_{\CKN} t^{-\frac12}$ in $\Pi$
in the region
\[
\bket{ (x,t) \in B_{\frac12} \times (0,\rho^{-2}T); \ 
 cN_{0}(u_0)^{2}|x|^{2\al}  \le t \le \frac{c}{1+\|u_0\|_{L^2_{\uloc}}
^{12 }}}.
\]
Rescaling back to $(v, p)$ and using \eqref{0304}, we see that $v$ is regular  and satisfies $|v(x,t)|\le C_{\CKN} t^{-\frac12}$ in the region
 \[
\bket{ (x,t) \in B_{\frac{\rho}2} \times (0,T); \ 
 cN_0^2|x|^{2\al}  \le t \le \frac{c\rho^2}{1+\rho^{-6}\|v_0\|_{L^2_{\uloc}}
^{12 }}}.
\]
By taking $c>0$ sufficiently small, we see that 
this region contains
\EQ{\label{T1.def}
\Pi_1 = \bket{ (x,t) \in B_{\frac12} \times (0,T); \ 
 cN_0^2|x|^{2\al} 
\le t 
\le T_1}, \quad T_1:=\frac{c}{
N_0^{\frac{2}{\alpha -1}}
(1+N_0^{\frac{6}{\alpha -1}}\|v_0\|_{L^2_{\uloc}}^{12})}.
}
This completes the proof of \eqref{0204}.

\medskip

Est.~\eqref{0205} is the special case of Theorem \ref{large data}
with $R=0$. Indeed for all $N_0<\infty$ we have
\EQ{\label{0304a}
|v(x,t)| \le M t^{\frac{\al-3}4} \quad (C_1 |x|^2 < t< S_2),
}
where $M =C \bkt{ 1 +  \|v_0 \|_{L^2_{\uloc}}S_1^{\frac14(4-\al)}}$ if $N_0 \le 1$, and  $M = C \bkt{ N_0^{(\al+1)/2} + N_0^{3/2} \|v_0 \|_{L^2_{\uloc}}  S_2^{\frac14(4-\al)} }$ if $N_0 \ge 1$. Hence we can take
\[
C_0 =  C \bkt{1+ N_0^{(\al+1)/2} + (1+ N_0^{3/2}) \|v_0 \|_{L^2_{\uloc}}  S_1^{\frac14(4-\al)} }.
\]
We also have $C_1 = cN_0^{2/\al}$ if $N_0 \le 1$, and  $C_1 = cN_0^{-2}$ if $N_0 \ge 1$. Hence we can take
\[
C_1 = \frac {c N_0^{2/\al}}{(1+N_0)^{2+2/\al}}.
\]
Therefore \eqref{0304a} implies \eqref{0205}
of Theorem \ref{thm:main}.
This completes the proof.
\end{proof}

\begin{proof}
[Proof of Theorem \ref{thm:0209}]
The proof is similar to that of \eqref{0205}, but we give the details for completeness.
We first observe that  
\begin{equation}
\label{0211}
N(x_0):= \sup_{r\in (0,1]} \frac{1}{r^{\alpha}}
\int_{B_r(x_0)}|v_0|^2
 \le C\|v_0\|_{L^q}^2 <\infty
\end{equation}
 for $x_0\in B_1$ with $\alpha =3-\frac{6}{q} \in (1,3]$ for $3<q\le \infty$. Here 
 we write $\|\cdot\|_{L^q}=\|\cdot\|_{L^q(B_2)}$. 
For each $x_0 \in B_1$ we may adapt the proof of 
\eqref{0205} for $v(x+x_0,t)$ and $R=0$. 
Indeed for any $\delta \ge 70N(x_0)$ restricting the time variable
in \eqref{0220b} and \eqref{0220c} as 
$\widetilde{S}_a \le t \le \widetilde{S}_b$ with
\EQN{
\widetilde{S}_a
&: = \frac{c}{1+\de^2}\bke{\frac{N(x_0)}{\delta}}^{\frac2{\alpha}}|x-x_0|^2,
\\
\widetilde{S}_b
& :=\min \bke{S_b,\, 
\frac{1}{1+\delta^2}\bke{
\frac{c\delta}{\|v_0\|_{L^2_{\uloc}}}}^{\frac{4}{4-\alpha}} },
 }
we have 
\EQ{
|v(x,t)|
 \le  C\bkt{ (1+\delta^{\frac{\alpha}2})\delta^{\frac12} + (1+\delta)^{\frac32}\|v_0\|_{L^2_{\uloc}} t^{\frac14(4-\al)}}
 t^{\frac14 (\alpha-3)}
 \le C(1+\delta^{\frac{\alpha}2})\delta^{\frac12}
 t^{\frac14 (\alpha-3)}.
}
The second inequality is by the second argument of $\widetilde{S}_b$.
By \eqref{0211}, taking $C>0$ sufficiently big, 
we may choose $\delta =C\|v_0\|_{L^q}^2$ uniformly on 
$x_0 \in B_1$. Taking $x=x_0$, this implies
\begin{align*}
\|v(t)\|_{L^\infty(B_1)}
 \le 
 C(1+\|v_0\|_{L^q}^{\alpha})\|v_0\|_{L^q}
 t^{\frac14 (\alpha-3)}
\end{align*}
for $0<t<T_3$ where
\begin{multline}
T_3=
\min \Bigg( T, 
\,
\frac{c\min \bke{1, \|v_0\|_{L^q}^8}}{1+\|v_0 \|_{L^2_{\uloc}}^{12} }, 
\,
\bke{\frac{c}
{(1+\|v_0\|_{L^q}^{\alpha})\|v_0\|_{L^q}} }^{\frac{4}{\alpha -1}},\\
\,
 \frac{c}{(1+\|v_0\|_{L^2_{\uloc}}^{4/3})(1+\|v_0\|_{L^q}^4)}, 
\,
\frac{1}{1+\|v_0\|_{L^q}^4}\bke{
\frac{c\|v_0\|_{L^q}^2}{\|v_0\|_{L^2_{\uloc}}}}^{ \frac{4}{4-\alpha} } \Bigg) .
\end{multline}
This shows the desired claim.
\end{proof}

\medskip

\begin{proof}[Proof of Theorem \ref{thm:0430}]
For $\alpha:=-\beta >1$ we observe that 
\[
\frac{1}{r^\alpha}\int_{B_r} \abs{v_0}^2
\le \int_{B_r} |x|^{\beta}|v_0|^2
\le
L:=\|v_0\|_{L^{2,\beta}(\R^3)}^2 <\infty.
\]
Hence $N_{(\al)}(v_0)= \sup_{r \in (0,1]}\frac{1}{r^\alpha}\int_{B_r} \abs{v_0}^2 \le L $. By \eqref{0204} of Theorem \ref{thm:main}, $v$ is regular and satisfies $|v(x,t)| \le C t^{-1/2}$ in the region
\EQ{\label{0319a}
\bket{(x,t) \in B_{\frac12} \times (0,\infty)\,:
 c_1L\abs{x}^{2\alpha}\le t<T_1}
}
with $T_1=T_1(\norm{v_0}_{L^2_\uloc}, L,\al)$, while $c_1$ and $C$ depend  only on $\al$.
Let $T_4= \min(T_1,c_1 L2^{-2\al})$. The region
\[
\bket{(x,t) \in \R^3 \times (0,\infty)\,:
 c_1L\abs{x}^{2\alpha}\le t<T_4}
\]
is a subset of \eqref{0319a}. This finishes the proof of Theorem \ref{thm:0430}.
\end{proof}

\end{document}